%% file: v1_arxiv.tex
\documentclass[12pt]{amsart}

\usepackage{amsfonts, amstext, amsmath, amsthm, amscd, amssymb, enumitem, url}
\usepackage[table]{xcolor}
\usepackage{tikz-cd}
\usepackage{pdfpages}
\usepackage{pdflscape}

\usepackage{geometry}
\usepackage[parfill]{parskip}
\usepackage{microtype}

\usepackage{tabularx}

\newcolumntype{x}{>{\centering\arraybackslash}X}
\usepackage{makecell}
\usepackage{longtable}
\usepackage{hhline}
\usepackage{multirow}
\usepackage{subfig}

\usetikzlibrary{arrows.meta}

\usepackage[colorlinks,citecolor=cyan,linkcolor=magenta]{hyperref}
\usepackage{zref-clever} 
\zcsetup{cap = true, nameinlink = true}
\newcommand{\Cref}[1]{\zcref{#1}}

\newtheorem*{prop:surtototallyperiodicrecog}{\Cref{prop:surtototallyperiodicrecog}}
\newtheorem*{prop:freepiecepartialsection}{\Cref{prop:freepiecepartialsection}}

\newtheorem{thm}{Theorem}[section]
\zcRefTypeSetup{thm}{Name-sg = Theorem}

\usepackage{xparse} 

\NewCommandCopy{\newtheoremcopy}{\newtheorem}
\RenewDocumentCommand{\newtheorem}{m O{thm} m}{ 
\newtheoremcopy{#1}[#2]{#3}
\AddToHook{env/#1/begin}{\zcsetup{countertype={thm=#1}}}
\zcRefTypeSetup{#1}{Name-sg = #3}}

\newtheorem{lem}[thm]{Lemma}
\newtheorem{prop}[thm]{Proposition}
\newtheorem{cor}[thm]{Corollary}
\newtheorem{claim}[thm]{Claim}
\newtheorem{conj}[thm]{Conjecture}

\RenewDocumentCommand{\newtheorem}{m O{thm} m}{ 
\newtheoremcopy{#1}[#2]{#3}
\AddToHook{env/#1/begin}{\zcsetup{countertype={thm=#1}}}
\zcRefTypeSetup{#1}{Name-sg = #3}
\AtBeginEnvironment{#1}{\pushQED{\hfill $\lozenge$}}
\AtEndEnvironment{#1}{\popQED}}

\theoremstyle{definition}
\newtheorem{defn}[thm]{Definition}

\newtheorem{eg}[thm]{Example}

\numberwithin{equation}{section}

\usepackage{todonotes}
\renewcommand{\epsilon}{\varepsilon}

\usepackage{stmaryrd}
\newcommand{\cut}{\!\bbslash\!}

\newcommand{\orb}{\mathrm{orb}}

\DeclareMathOperator{\ind}{\mathrm{ind}}

\DeclareMathOperator{\intr}{\mathrm{int}}

\allowdisplaybreaks

\begin{document}

\title[From graph manifolds to totally periodic flows]{From pseudo-Anosov flows on graph manifolds to totally periodic flows}

\author{Chi Cheuk Tsang}
\address{Département de mathématiques \\
Université du Québec à Montréal \\
201 President Kennedy Avenue \\
Montréal, QC, Canada H2X 3Y7}
\email{tsang.chi\_cheuk@uqam.ca}

\begin{abstract}
We show that every pseudo-Anosov flow on a graph manifold is almost equivalent, i.e. orbit equivalent in the complement of a finite collection of closed orbits, to a totally periodic pseudo-Anosov flow or a suspension Anosov flow.
The proof is via a hands-on construction of a partial Birkhoff section with genus one components that misses finitely many closed orbits.
When combined with previous work of the author, this implies that every transitive Anosov flow on a graph manifold with orientable stable and unstable foliations is almost equivalent to a suspension Anosov flow.
\end{abstract}

\maketitle


\section{Introduction} \label{sec:intro}

An \textbf{Anosov flow} on a closed orientable 3-manifold is a flow whose flow lines converge along leaves of a stable foliation and diverges along leaves of an unstable foliation.

As a simple example, the suspension flow of an Anosov map on the torus is an Anosov flow.
On the other hand, there are Anosov flows with much more complicated global dynamics.
For instance, there are
Anosov flows where every closed orbit is homotopic to infinitely many other closed orbits \cite{Fen94}, 
Anosov flows with infinitely many transverse tori \cite{BBY17}, and
Anosov flows whose stable and unstable foliations are non-homeomorphic \cite{BBY26}.

In view of this, one might find the following conjecture surprising.

\begin{conj}[{Fried \cite{Fri83}, Christy \cite[Problem 3.54]{Kir97}, Ghys}] \label{conj:friedchristyghys}
Any two transitive Anosov flows with orientable stable and unstable foliations are \textbf{almost equivalent}, in the sense that they differ only by reparametrization and homeomorphism in the complement of finitely many closed orbits. 
\end{conj}

See \cite[Section 1]{Tsa24b} for more discussion on the different formulations of this conjecture. 
We thank Bin Yu for pointing out the reference \cite[Problem 3.54]{Kir97}.

Nevertheless, there has been an increasing amount of evidence in support of \Cref{conj:friedchristyghys} in recent years. 
We now know that:
\begin{itemize}
    \item Any two suspension Anosov flows with orientable stable and unstable foliations are almost equivalent (\cite{DS19}).
    \item The geodesic flow of every hyperbolic orientable orbifold of `small type' is almost equivalent to a suspension Anosov flow (\cite{Fri83, Deh15, HM17, Deh26}).
    \item Every Anosov flow with orientable stable and unstable foliations admitting a Birkhoff section with Penner type first return map is almost equivalent to a suspension Anosov flow (\cite[Theorem 1.3]{Tsa24b}).
    \item Every totally periodic transitive Anosov flow with orientable stable and unstable foliations is almost equivalent to a suspension Anosov flow (\cite[Theorem 1.4]{Tsa24b}).
\end{itemize}
For the last item, a flow is \textbf{totally periodic} if it is defined on a \textbf{graph manifold}, i.e. a manifold where all JSJ pieces are Seifert fibered, and the fibers in each JSJ piece are homotopic to a closed orbit.
(For the terminology on the other items, we refer the reader to the corresponding references, since they will not play a role in this paper.)

The purpose of this article is to make an addition to these results. 
The main theorem is the following.

\begin{thm} \label{thm:graphmanifoldtototallyperiodic}
Every pseudo-Anosov flow on a graph manifold is almost equivalent to a totally periodic flow or a suspension Anosov flow.
\end{thm}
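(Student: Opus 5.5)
The plan follows the strategy announced in the abstract. Given a pseudo-Anosov flow $\phi$ on a graph manifold $M$, I will build a \emph{partial Birkhoff section} $S$: an immersed surface, embedded in its interior, transverse to $\phi$, with boundary on finitely many closed orbits $\gamma_1,\dots,\gamma_n$. Every component of $S$ will have genus one, and $S$ will meet every orbit of $\phi$ except those in a finite collection of closed orbits $\delta_1,\dots,\delta_m$. Cutting $M$ along $S$ and blowing up along the $\gamma_i$ and $\delta_j$, i.e.\ Goodman--Fried surgery, changes $\phi$ only away from finitely many closed orbits, so the result is almost equivalent to $\phi$. The aim is to arrange $S$ so that, after suitable surgeries along the $\gamma_i$, the new flow has each component of $S$ becoming a closed torus transverse to the flow or tangent to an annulus of periodic orbits. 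It then remains to check whether the resulting flow is a suspension Anosov flow (one component of $S$ is a global torus section) or totally periodic (the tori cut the manifold into periodic Seifert pieces).

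\textbf{Step 1: reduce to the JSJ pieces.} I will use the known structure of pseudo-Anosov flows on graph manifolds, due to Barbot--Fenley. Each Seifert piece $P$ of $M$ is either \emph{periodic}, meaning the fiber is freely homotopic to a closed orbit and $\phi|_P$ is a neighborhood of a chain of Birkhoff annuli and periodic orbits, or \emph{free}, meaning $\phi$ is locally like a suspension of a pseudo-Anosov map on the base orbifold. The JSJ tori can be isotoped to Birkhoff tori, i.e.\ unions of Birkhoff annuli. On a periodic piece the flow already has the desired form, so I will leave it essentially untouched, missing only its finitely many periodic orbits. All the work is on free pieces.

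\textbf{Step 2: sections on free pieces.} A free piece $P$ fibers over the circle with pseudo-Anosov-like monodromy, with the fibers transverse to $\phi$ in the interior. I will construct in $P$ a partial section whose components are tori with holes, bounded by the closed orbits lying on the Birkhoff annuli of $\partial P$. Locally, each component will be obtained from a fiber, or from a neighborhood of a transverse curve in the base, by resolving it with annuli along a finite set of closed orbits. The point is to choose these orbits so that the resulting surface has genus exactly one. I expect to use the structure of singular orbits and the periodic orbits on the boundary annuli for this: tubing a disk-type transversal along two orbits produces a genus one piece.

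\textbf{Step 3: glue and apply surgery.} I will glue the local pieces across the Birkhoff tori so that the boundary multiplicities along each boundary closed orbit match. This makes the union a genuine partial Birkhoff section missing only finitely many orbits. I will then perform Fried surgery at the boundary orbits so that each genus one component closes up. Finally, I will verify using the classification of Step 1 that the new flow on the new manifold is a graph manifold flow in which every Seifert piece is periodic. If the surgery produces a single transverse torus fibration, the resulting flow is instead a suspension Anosov flow.

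The main obstacle is Step 2 together with its compatibility in Step 3: forcing genus exactly one while keeping the boundary multiplicities coherent across the Birkhoff annuli between adjacent pieces. For this I would first try a hands-on choice on each free piece, using a Markov-partition-like decomposition of the fiber surface into rectangles, and then balance the multiplicities with extra annular pieces along the periodic orbits of neighboring periodic pieces.
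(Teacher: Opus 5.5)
\textbf{Step 2 rests on a false picture of free pieces, so the core construction is missing.} Your overall frame matches the paper: build a genus-one partial Birkhoff section that misses finitely many closed orbits, then apply Goodman--Fried surgery. But a free Seifert piece does not fiber over the circle with pseudo-Anosov-like monodromy. Pseudo-Anosov mapping tori are hyperbolic, and a Seifert piece with hyperbolic base orbifold fibers over the circle only with periodic monodromy. So there is no such fibration, and no fiber to ``resolve.'' The correct input (Barbot--Fenley) is that, after homotoping $P$, the restriction $\phi|_P$ is orbit semi-equivalent to a $k$-fold fiberwise cover of the geodesic flow of a hyperbolic orbifold $\Sigma$ with geodesic boundary.

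The paper proceeds as follows. It cuts $\Sigma$ into pairs of pants and their cone-point degenerations, using arcs between order-$2$ cone points where needed. In each piece it takes an oriented closed geodesic $\alpha$ with three self-intersections. The lift of $\alpha$ bounds three once-punctured tori, built from unit tangent vectors to families of convex loops. One checks that these meet every orbit except those lying over the boundary curves. They are then lifted to the $k$-fold cover and pulled back through the semi-equivalence. Their boundaries lie on interior orbits of the free piece, so your multiplicity-matching problem across Birkhoff tori never arises.

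What does matter, and what your ``tubing'' gives no control over, is that every boundary component has index exactly $-1$. Goodman--Fried surgery needs the slope of $S$ to meet the degeneracy slope at least twice, and Poincar\'e--Hopf on a genus-one surface then forces index exactly $-1$ and no interior singular points. Note also that surgery is done along the boundary orbits of $S$, not along the missed orbits.

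\textbf{Two further steps fail as stated.} First, leaving periodic pieces untouched does not leave only their finitely many closed orbits unmet. Your $S$ gives no control over orbits that pass between adjacent periodic pieces without ever entering a free piece. In general these include non-closed orbits, for instance heteroclinic connections between closed orbits of two adjacent periodic pieces, and the recognition step needs every non-closed orbit to meet $S$. You need Barbot--Fenley's theorem that a JSJ torus between two periodic pieces is isotopic to a transverse torus, and you must include these tori in $S$. Then every missed orbit lies in a single periodic piece or JSJ torus, hence is one of finitely many closed orbits.

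Second, your final check ``using the classification of Step~1'' cannot work. That classification describes $M$, but after surgery you have a new manifold about which you know nothing a priori, not even that it is a graph manifold. You need an intrinsic recognition criterion, which the paper supplies. Suppose a partial cross section by tori meets every orbit except finitely many closed ones, and consider a complementary region:
\begin{itemize}
\item If the region contains no closed orbit, it is a product, by the closing lemma.
\item Otherwise, the stable half-leaves of a closed orbit in it must reach the incoming boundary, since every non-closed orbit meets $S$. So one can split off a round handle glued to a product by adding further tori to the section.
\end{itemize}
The regions are therefore books of $I$-bundles, which are Seifert fibered with the round-handle cores as fibers. Hence either every region is a product and the flow is a suspension Anosov flow, or every JSJ piece is periodic.
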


A \textbf{pseudo-Anosov flow} is a flow that is Anosov away from a finite collection of singular orbits.
We consider these flows more generally since it does not add any difficulty to the proof.
When combined with \cite[Theorem 1.4]{Tsa24b} as stated above, this gives the following corollary.

\begin{cor}
Every transitive Anosov flow with orientable stable and unstable foliations on a graph manifold is almost equivalent to a suspension Anosov flow.
\end{cor}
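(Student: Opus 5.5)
The statement to prove is the Corollary. I would obtain it by chaining \Cref{thm:graphmanifoldtototallyperiodic} with \cite[Theorem 1.4]{Tsa24b}, using that almost equivalence is an equivalence relation. The work is in checking that the hypotheses of \cite[Theorem 1.4]{Tsa24b} survive the first step.

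Let $\phi$ be a transitive Anosov flow with orientable stable and unstable foliations on a graph manifold $M$. By \Cref{thm:graphmanifoldtototallyperiodic}, $\phi$ is almost equivalent to some flow $\psi$ on a manifold $N$. Either $\psi$ is a suspension Anosov flow, and we are done, or $\psi$ is a totally periodic pseudo-Anosov flow.

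In the second case I would verify three things about $\psi$.

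\begin{enumerate}
\item \emph{Transitivity.} An almost equivalence restricts to an orbit equivalence between the complements of finitely many closed orbits. Transitivity is equivalent to the existence of a dense orbit, and a dense orbit of $\phi$ avoiding the removed orbits maps to an orbit of $\psi$ that is dense in the complement, hence dense in $N$. So $\psi$ is transitive.

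\item \emph{Orientability of the foliations.} This is preserved on the complement of the removed orbits. Around each removed orbit, orientability is governed by the holonomy of the local prongs.

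\item \emph{No singular orbits.} $\psi$ must be Anosov rather than merely pseudo-Anosov, since \cite[Theorem 1.4]{Tsa24b} is stated for Anosov flows.
\end{enumerate}

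The main obstacle is items 2 and 3 together. Almost equivalence is typically realized by Goodman--Fried surgeries along the removed orbits, and these can create prong singularities or destroy orientability near the surgered orbits. I would try first to prove the stronger form of \Cref{thm:graphmanifoldtototallyperiodic} that the paper's method actually yields. The partial Birkhoff section has genus one components, so the relevant first return maps are on tori and the glued-in orbits should be nonsingular. If the boundary orbits of the section and the surgery slopes are chosen compatibly with the orientations of the stable and unstable foliations, the resulting flow should be Anosov with orientable foliations.

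If a singular orbit were unavoidable, the fallback is a further surgery along that orbit, killing the prongs while keeping the flow totally periodic. Once $\psi$ is a totally periodic transitive Anosov flow with orientable stable and unstable foliations, \cite[Theorem 1.4]{Tsa24b} makes it almost equivalent to a suspension Anosov flow. Composing the two almost equivalences, which is legitimate since finite unions of removed orbits remain finite after matching orbits up, proves the Corollary.
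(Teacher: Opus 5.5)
Your route is the paper's. There the corollary is a one-line combination of \Cref{thm:graphmanifoldtototallyperiodic} with \cite[Theorem 1.4]{Tsa24b}, and you are right that this implicitly needs the totally periodic flow $\psi$ to be transitive, Anosov, and to have orientable foliations. Your item 1 is fine. Items 2 and 3, however, are left as expectations (``should be nonsingular'', ``should be Anosov''). You tie them to a compatibility choice of slopes, but no such choice exists and none is needed. Your fallback also fails in general. Goodman--Fried surgery cannot remove the prongs of, say, an unrotated $q$-prong orbit with $q \ge 3$: its degeneracy multicurve has $q$ parallel components, so every slope meets it in a multiple of $q$ points. Nothing guarantees that total periodicity would survive such a surgery either.

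The missing step is short.
\begin{enumerate}
\item \emph{No singular orbits.} From the proof of \Cref{prop:surtototallyperiodicrecog}, $\psi$ is obtained from $\phi$ only by Goodman--Fried surgery along the boundary orbits of $S$, using the slopes $s$ induced by $S$. Every boundary component has $\ind(c) = -1$, i.e.\ $p = 2$, so each filled orbit is $2$-pronged, hence regular. Since $\phi$ has no singular orbits, neither does $\psi$. Your genus-one heuristic can also be made rigorous: after capping, every component of the cross section is a torus, and the filled orbits cross these tori. Poincar\'e--Hopf with $\chi = 0$ and all singular indices negative then leaves no room for singular points.
\item \emph{Orientability.} This is now automatic. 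Let $\Gamma' \subset N$ be the union of the filled orbits and $V$ its tubular neighborhood. On $N \setminus \Gamma'$, the stable and unstable foliations of $\psi$ are those of $\phi$ on $M$ minus the boundary orbits, hence orientable. Restriction $H^1(N;\mathbb{Z}/2) \to H^1(N \setminus \Gamma';\mathbb{Z}/2)$ is injective, because $H^1(N, N\setminus\Gamma';\mathbb{Z}/2) \cong H^1(V,\partial V;\mathbb{Z}/2) \cong H_2(V;\mathbb{Z}/2) = 0$. So the orientation obstructions vanish on all of $N$.
\end{enumerate}
With these two checks, \cite[Theorem 1.4]{Tsa24b} applies.
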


\subsection{Sketch of proof}

The proof of \Cref{thm:graphmanifoldtototallyperiodic} is explicit, and does not use the horizontal Goodman surgery machinery of \cite{Tsa24a} and \cite{Tsa24b}. 
Instead, it relies on two ingredients.
The first ingredient is a criterion for recognizing when a pseudo-Anosov flow is almost equivalent to a totally periodic flow or a suspension Anosov flow.

\begin{prop:surtototallyperiodicrecog}
Let $\phi$ be a pseudo-Anosov flow. Suppose $\phi$ admits a partial Birkhoff section $S$ satisfying:
\begin{itemize}
    \item every component of $S$ has genus one,
    \item every boundary component of $S$ has index $-1$, and
    \item the set of flow lines that are disjoint from $S$ is a finite collection of closed orbits.
\end{itemize}
Then $\phi$ is almost equivalent to totally periodic flow or a suspension Anosov flow.
\end{prop:surtototallyperiodicrecog}

Here, a \textbf{partial Birkhoff section} is an immersed, possibly disconnected, surface with boundary $S$ in $M$ where the interior $\intr(S)$ is embedded and transverse to $\phi$, and the boundary $\partial S$ lies along orbits of $\phi$.

A partial Birkhoff section can be transformed into a \textbf{partial cross section}, i.e. a partial Birkhoff section with empty boundary, by Goodman-Fried surgery. 
Applying this to the hypothesis of \Cref{prop:surtototallyperiodicrecog}, we get a disjoint union of tori that intersect all but finitely many closed orbits of the surgered flow. 
From this, one can deduce that the surgered flow is totally periodic.

The second ingredient is a construction of a partial Birkhoff section as in \Cref{prop:surtototallyperiodicrecog}.
To outline this, we have to first briefly explain the work of Barbot-Fenley \cite{BF13, BF15, BF21} on pseudo-Anosov flows on Seifert fibered JSJ pieces.
A Seifert fibered piece is \textbf{periodic} if its fiber is homotopic to a closed orbit. Otherwise it is \textbf{free}.
Periodic pieces contain only finitely many closed orbits, and JSJ tori between periodic pieces can be made transverse, capturing orbits that pass between the pieces.
Thus it remains to capture the orbits inside free pieces, or at least all but finitely many of them.

To this end, we use the fact that each free piece is a blowup of a finite fiberwise cover of the geodesic flow of a hyperbolic orbifold with boundary.
We construct partial Birkhoff sections for the latter.

\begin{prop:freepiecepartialsection}
Let $\Sigma$ be a hyperbolic orbifold, possibly non-orientable and possibly with boundary. 
Let $\phi$ be a $k$-fold fiberwise cover of the geodesic flow of $\Sigma$. 
Then there exists a partial Birkhoff section $S$ satisfying:
\begin{itemize}
    \item every component of $S$ has genus one,
    \item every boundary component of $S$ has index $-1$, and
    \item $S$ intersects every flow line except for a finite collection of closed orbits.
\end{itemize}
\end{prop:freepiecepartialsection}

The construction of these partial Birkhoff sections is inspired by the construction behind \cite{Deh26}, and is very hands-on. See the figures in \Cref{subsec:basicblock}.

Taking the union of the lifts of these partial Birkhoff sections, along with the transverse tori between periodic pieces, and applying \Cref{prop:surtototallyperiodicrecog} gives us \Cref{thm:graphmanifoldtototallyperiodic}.

\subsection*{Acknowledgement}

We thank Thomas Barthelmé, Sergio Fenley, and Federico Salmoiraghi for helpful discussions. 
We thank Pierre Dehornoy, Clément Perault, and Bin Yu for their comments on a draft of this paper.
This project was completed while the author is a CRM postdoctoral fellow based at CIRGET. We thank the center for its support.

\section{Background} \label{sec:background}

In this section, we recall some background on pseudo-Anosov flows and their interaction with the JSJ decomposition of 3-manifolds.

\subsection{Pseudo-Anosov flows}

A \textbf{flow} on a 3-manifold $M$ is a continuous map $\phi: \mathbb{R} \times M \to M$ satisfying $\phi(0,x) = x$ and $\phi(s,\phi(t,x)) = \phi(s+t,x)$ for every $s,t \in \mathbb{R}, x \in M$.
Two flows $\phi_1$ and $\phi_2$ are \textbf{orbit equivalent} if there is a homeomorphism sending the oriented flow lines of $\phi_1$ to that of $\phi_2$, while not necessarily preserving their parametrizations. 
Two flows $\phi_1$ and $\phi_2$ are \textbf{almost equivalent} if there exists a finite collection $\mathcal{C}_1$ of closed orbits of $\phi_1$ and a finite collection $\mathcal{C}_2$ of closed orbits of $\phi_2$, such that the restriction of $\phi_1$ to the complement of $\mathcal{C}_1$ is orbit equivalent to the restriction of $\phi_2$ to the complement of $\mathcal{C}_2$.

A \textbf{pseudo-Anosov flow} is a flow $\phi$ on a closed oriented 3-manifold $M$ for which there exists a pair of transverse singular 2-dimensional foliations $\mathcal{F}^s$ and $\mathcal{F}^u$, which we refer to as the \textbf{stable} and \textbf{unstable} foliations respectively, such that
\begin{itemize}
    \item $\mathcal{F}^s$-leaves and $\mathcal{F}^u$-leaves intersect in flow lines,
    \item flow lines along each $\mathcal{F}^s$-leaf converge in forward time and diverge in backward time, and
    \item flow lines along each $\mathcal{F}^u$-leaf diverge in forward time and converge in backward time.
\end{itemize}
For more details on the definition of pseudo-Anosov flows, we refer the reader to \cite[Chapter 1.1]{BM25}.

See \Cref{fig:paflow} left for a local picture of a pseudo-Anosov flow away from singular orbits, and \Cref{fig:paflow} right for a local picture near a 3-pronged singular orbit.

\begin{figure}
    \centering
    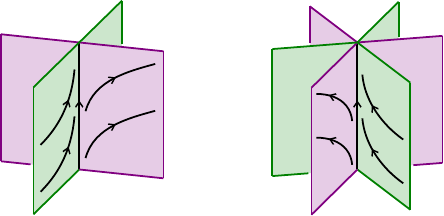
    \caption{Local picture of a pseudo-Anosov flow away from singular orbits (left) and near a 3-pronged singular orbit (right).}
    \label{fig:paflow}
\end{figure}

\begin{eg}[Suspension Anosov flow]
Let $f:T \to T$ be an Anosov map on a torus. Then the suspension flow on the mapping torus $M = T \times [0,1]/(x,1) \sim (f(x),0)$ is a pseudo-Anosov flow with no singular orbits. We refer to such a flow as a \textbf{suspension Anosov flow}.
\end{eg}

Suppose $\gamma$ is a closed orbit of a pseudo-Anosov flow $\phi$ on $M$.
Let $\nu$ be a small tubular neighborhood of $\gamma$.
The local stable and unstable leaves of $\gamma$ cut $\nu$ into multiple components. We refer to these components as the \textbf{quadrants} at $\gamma$.
Also, the intersection of the local stable leaves of $\gamma$ with $\partial \nu$ determines a multicurve on $\partial \nu$. We call the isotopy class of this multicurve the \textbf{degeneracy curve} at $\gamma$.

Suppose $s$ is a slope on $\partial \nu$. Let $p$ be geometric intersection number between $s$ and $d$. 
If $p \geq 2$, then there exists a pseudo-Anosov flow $\phi_s(\gamma)$ on the Dehn surgered manifold $M_s(\gamma)$ obtained by `blowing up' $\phi$ at $\gamma$ then `blowing down' along $s$. 
See \Cref{fig:gfsurgery}, ignoring the gray surface for now.
In particular, $\phi_s(\gamma)$ has a $p$-pronged closed orbit in the core of the filling solid torus, which we refer to as the \textbf{filled orbit}, and the restriction of $\phi_s(\gamma)$ to the complement of the filled orbit is orbit equivalent to the restriction of $\phi$ to the complement of $\gamma$.
The latter property implies that $\phi$ and $\phi_s(\gamma)$ are almost equivalent.
We will refer to $\phi_s(\gamma)$ as a \textbf{Goodman-Fried surgery} of $\phi$ along $\gamma$.
See \cite{Fri83} for details.

\begin{figure}
    \centering
    \fontsize{8pt}{8pt}
    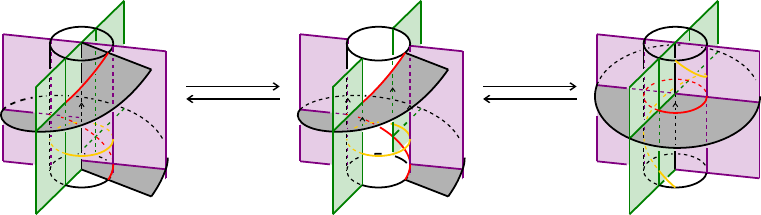
    \caption{Goodman-Fried surgery is performed by blowing up along a closed orbit $\gamma$ and blowing down along a different slope. It can be used to transform partial Birkhoff sections into partial cross sections.}
    \label{fig:gfsurgery}
\end{figure}

Finally, we will need the following version of the closing lemma in \Cref{subsec:bookofIbundles}.

\begin{prop}[{\cite[Proposition 1.4.4]{BM25}}] \label{prop:closing}
Let $K$ be a compact subset in the complement of the singular orbits. For every $\epsilon > 0$, there exists $\delta > 0$ such that for every $x \in K$, if $d(x,\phi(T,x))<\delta$ for large enough $T$, then there exists a closed orbit in the $\epsilon$-neighborhood of the orbit segment from $x$ to $\phi(T,x)$. \qed
\end{prop}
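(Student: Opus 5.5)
The approach is the standard Markov-rectangle argument for the Anosov closing lemma, carried out in the orbit space so that only the topological hyperbolicity in the definition is needed. First, since $K$ avoids the singular orbits, compactness gives a uniform local product structure on a neighborhood $U$ of $K$. Concretely, there is $\eta>0$ such that every $y\in U$ has a flow box of size $\eta$, and a transverse disk $D_y$ through $y$ that the local stable and unstable leaves foliate as a product $[-\eta,\eta]^2$. Given $\epsilon$, I will shrink $\eta$ so that any orbit segment staying within the flow boxes built on a rectangle of size $\eta$ around the segment from $x$ to $\phi(T,x)$ lies in its $\epsilon$-neighborhood.

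Next I close up the orbit segment. If $d(x,\phi(T,x))<\delta$ with $\delta\ll\eta$, then $\phi(T,x)$ lies in the flow box of $x$, so after adjusting $T$ by a bounded amount I get a point $x'\in D_x$ close to $x$. The segment followed by a short arc in $D_x$ gives a loop, hence a deck transformation $g$. In the universal cover, $g$ acts on the orbit space $\mathcal{O}$, which is a plane with transverse stable and unstable foliations. Let $R\subset\mathcal{O}$ be the small product rectangle that is the image of $D_x$ around the lift $\tilde x$.

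The key step, and the main obstacle, is to show that for $T$ large, $g^{-1}(R)\cap R$ is a rectangle that crosses $R$ \emph{stable-fully}: it spans the full stable extent of $R$ and is thin in the unstable direction. Since the definition is only topological, the point to establish is uniform contraction and expansion. I would show that for each $\eta'<\eta$ there is $T_0$ such that, for every $y\in U$ and $t\ge T_0$ with $\phi(t,y)\in U$, the time-$t$ holonomy maps the local stable segment of size $\eta$ at $y$ into one of size $\eta'$. By the symmetric statement for backward time, the unstable segment of size $\eta$ at $\phi(t,y)$ is pulled back into one of size $\eta'$. I would prove this by contradiction together with compactness of $K$: a sequence of counterexamples would produce, in the limit, two distinct orbits in a common local stable leaf that stay at distance bounded below for all forward time. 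That contradicts the convergence axiom, and the same argument also uses expansivity away from singular orbits. This is exactly where the hypothesis that $K$ avoids the singular orbits is needed, since near a singular orbit the product charts degenerate.

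With this in hand, I find the fixed point. Under $g$, the unstable sides of $R$ are stretched past $R$ and the stable sides are shrunk well inside it. Write $R=I^s\times I^u$ using the product structure, and define a map $F:R\to R$ by sending $(a,b)$ to the point whose stable coordinate is that of $g(a,b)$ and whose unstable coordinate comes from $g^{-1}$ applied along the unstable direction. The crossing property ensures $F$ is well defined and continuous on $R$, and a fixed point of $F$ is a fixed point of $g$ in $R$. Brouwer's theorem then gives such a fixed point; alternatively, the nested intersection $\bigcap_n g^{-n}(R)\cap g^n(R)$ consists of a single point. A point of $\mathcal{O}$ fixed by $g$ is an orbit in the universal cover invariant under $g$, so it projects to a closed orbit of $\phi$. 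That orbit passes through $D_x$ within $\eta$ of $x$ and follows the flow boxes along the segment, so it lies in the $\epsilon$-neighborhood. This gives the statement, with $\delta$ and the threshold for \emph{large enough} $T$ depending only on $K$ and $\epsilon$.
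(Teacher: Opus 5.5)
The paper does not prove this statement; it quotes it from \cite{BM25}. Your overall architecture is the standard orbit-space proof, and the fixed-point half is sound: close up the segment to get a deck transformation $g$, show that $g$ moves a small product rectangle $R\subset\mathcal{O}$ across itself in Markov fashion, take fixed points of the induced self-maps of the two leaf spaces of $R$, and project to a closed orbit. One bookkeeping slip: for the positively traversed loop, $g$ \emph{stretches} stable segments of $\mathcal{O}$. The forward flow shrinks the stable segment of $\widetilde{D}_x$ through $\tilde x$ before it reaches $g\widetilde{D}_x$, while $g$ is an isometry. So it is $g(R)\cap R$, not $g^{-1}(R)\cap R$, that crosses $R$ stable-fully. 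By symmetry this is harmless.

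The genuine gap is in the step you yourself flag as the crux, uniform contraction. Your compactness argument does not work as stated. Suppose $z_n$ lies on the size-$\eta$ stable segment of $y_n$, and the images at times $t_n\to\infty$ are $\eta'$-apart. Passing to limits $y_n\to y$, $z_n\to z$ only retains information at bounded times, because continuity of $\phi$ is uniform only on compact time intervals. The separation at the escaping times $t_n$ is therefore lost, and nothing prevents $(y,z)$ from converging in forward time. So there is no contradiction with the convergence axiom: pointwise convergence along stable leaves does not upgrade to uniform convergence this way.

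The limit has to be taken at the bad times and played against the backward-divergence (expansivity) part of the definition. It goes in two steps:
\begin{enumerate}
\item Show there is $\eta_0>0$ such that, for every $y\in M$, the size-$\eta_0$ stable segment of $y$ stays within the expansivity constant $\epsilon_0$ of the orbit of $y$ for all forward time. The first times at which the separation reaches $\epsilon_0$ must tend to infinity as the initial size shrinks. A limit taken at those times yields two distinct orbits on a common local stable leaf that stay $\epsilon_0$-close for all backward time, which contradicts expansivity.
\item Rerun the same limit at the times $t_n$ to obtain contraction.
\end{enumerate}

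This all-time stability must also hold where the segment passes near singular orbits and leaves $U$. Your final sentence needs exactly this, and your lemma, which only controls times when both endpoints lie in $U$, does not supply it. To see that the closed orbit shadows the whole segment, you pass through the point where the local unstable leaf of $\tilde x$ meets the stable leaf of the fixed orbit. You must then know that this point and the fixed orbit stay close to the orbit of $\tilde x$ for every $t\in[0,T]$, not only at the two ends.
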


\subsection{Partial Birkhoff sections}

Let $\phi$ be a pseudo-Anosov flow on $M$.
A \textbf{partial Birkhoff section} to $\phi$ is an immersed, possibly disconnected, surface with boundary $S$ in $M$ where the interior $\intr(S)$ is embedded and transverse to $\phi$, and the boundary $\partial S$ lies along orbits of $\phi$.
A \textbf{partial cross section} to $\phi$ is a partial Birkhoff section that has empty boundary.

Suppose $c$ is a boundary component of a partial Birkhoff section $S$, lying along a closed orbit $\gamma$ of $\phi$.
Let $\nu$ be a small tubular neighborhood of $\gamma$. Then $S$ induces a slope $s$ on $\partial \nu$. 
Let $p$ be the geometric intersection number between $s$ and the degeneracy slope $d$ at $\gamma$. 
We define the \textbf{index} of $c$ to be the non-positive half-integer $\ind(c) = -\frac{p}{2}$.
Note that since $S$ is embedded, any other boundary component of $S$ lying along $\gamma$ will automatically have the same index.

Meanwhile, since the interior $\intr(S)$ is transverse to $\phi$, the stable foliation of $\phi$ induces a singular foliation on $\intr(S)$. We define the \textbf{index} of a $q$-pronged singular point $x$ to be the negative half-integer $\ind(x) = 1 - \frac{q}{2}$. 
Under this notation, the Poincaré-Hopf theorem states that
\begin{equation} \label{eq:indPH}
\chi(S) = \sum_c \ind(c) + \sum_x \ind(x)
\end{equation}
where the first sum is over boundary components of $S$ and the second sum is over singular points in $\intr(S)$.

Partial Birkhoff sections can be transformed into partial cross sections via Goodman-Fried surgery.
In more detail, suppose $c$ is a boundary component of a partial Birkhoff section $S$ with index $\ind(c) \leq -1$.
Then the geometric intersection number between the slope $s$ of $S$ and the degeneracy slope at $c$ is $p \geq 2$, thus one has the Goodman-Fried surgered flow $\phi_s(\gamma)$ on $M_s(\gamma)$. 
Up to an isotopy of $S$ along flow lines of $\phi$, the restriction of $S$ to the complement of $\gamma$ can be completed into a surface with boundary $S_s(\gamma)$ in $M_s(\gamma)$ by adding finitely many points on the filled orbit of $\phi_s(\gamma)$.
See \Cref{fig:gfsurgery}.
Then $S_s(\gamma)$ is a partial Birkhoff section to $\phi_s(\gamma)$, and it can be topologically obtained from $S$ by collapsing each boundary component of $S$ lying along $\gamma$ down to a point.

Performing this procedure at each boundary component of $S$, we obtain a partial cross section $R$ that can be topologically obtained from $S$ by collapsing each boundary component down to a point.
In particular, the components of $R$ can be canonically identified with those of $S$, and each corresponding component has the same genus.

\subsection{Quasi-transverse tori}

A particular kind of Birkhoff partial section plays a central role in the study of pseudo-Anosov flows:
A \textbf{Birkhoff annulus} is a Birkhoff partial section that is homeomorphic to an annulus.
From \Cref{eq:indPH}, we deduce that each Birkhoff annulus must be disjoint from the singular orbits, and both of its boundary components must have index $0$.
See \Cref{fig:maximallytransverse} left for a local picture of two Birkhoff annuli.

\begin{figure}
    \centering
    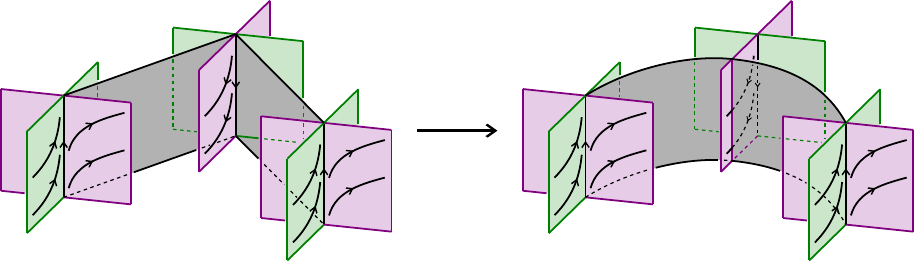
    \caption{If two adjacent Birkhoff annuli in a quasi-transverse torus $T$ occupy adjacent quadrants at a boundary orbit, then $T$ can be homotoped to remove one tangent orbit.}
    \label{fig:maximallytransverse}
\end{figure}

An immersed torus $T$ is \textbf{quasi-transverse} if it is a union of Birkhoff annuli $A_1,...,A_n$.
Furthermore, $T$ is \textbf{weakly embedded} if $A_i$ have disjoint interiors.
All quasi-transverse tori will be weakly embedded in this paper.

Suppose $A_1$ and $A_2$ are adjacent Birkhoff annuli in a quasi-transverse torus $T$. 
Let $\gamma$ be the orbit that their common boundary component lie along.
If $A_1$ and $A_2$ occupy adjacent quadrants at $\gamma$, then $T$ can be homotoped by pushing it along a stable or unstable half-leaf of $\gamma$ in order to remove one tangent orbit. 
See \Cref{fig:maximallytransverse}.

If no two adjacent Birkhoff annuli in a quasi-transverse torus $T$ occupy adjacent quadrants, then we say that $T$ is \textbf{maximally transverse}.
All quasi-transverse tori will be maximally transverse in this paper.

\begin{prop} \label{prop:quasitransversetori}
Let $\phi$ be a pseudo-Anosov flow on $M$.
Every $\pi_1$-injective embedded torus or Klein bottle $T$ in $M$ can be homotoped into a transverse or maximally transverse weakly embedded quasi-transverse torus $T'$. 
Moreover, $T'$ is unique up to homotopy along flow lines. 
\end{prop}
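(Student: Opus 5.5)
This result is due to Barbot and Fenley (building on Fenley's work on quasi-transverse tori), so I would assemble it from their theory of the orbit space rather than prove it from scratch. Throughout I work in the orbit space $\mathcal{O} \cong \mathbb{R}^2$ of the lifted flow $\tilde\phi$ on $\widetilde M$, with the induced singular foliations $\mathcal{O}^s, \mathcal{O}^u$. For the Klein bottle case, I would pass to the index-two subgroup, treat it as a torus there, and check that the resulting object is invariant under the extra deck transformation.

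\textbf{Step 1: the transverse case.} Let $G = \pi_1(T) \cong \mathbb{Z}^2$ act on $\mathcal{O}$. Suppose no nontrivial element of $G$ fixes a point of $\mathcal{O}$, i.e.\ no element of $G$ is represented by a closed orbit. Then I would argue, as in Fenley's analysis of $\mathbb{Z}^2$ actions, that the action is conjugate to that of a suspension. The consequences are that $\phi$ is a suspension Anosov flow and that $T$ is homotopic to a fiber, which is transverse.

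\textbf{Step 2: the case with a periodic element.} Otherwise some $g \in G$ fixes a point $x_0$, corresponding to a closed orbit. Using the commuting element $h$, I would consider the orbit $h^n x_0$. Fenley's results on freely homotopic orbits then say that $g$ fixes a bi-infinite \emph{chain of lozenges}: consecutive fixed points $x_i$ joined by lozenges, with $h$ acting as a translation along the chain. Each lozenge $L_i$ with corners $x_i, x_{i+1}$ gives a Birkhoff annulus in $M$ bounded by the projections of those corners, and this annulus is embedded in the interior. A fundamental domain of the $h$-action gives finitely many annuli $A_1,\dots,A_n$. Their union is a $\pi_1$-injective torus carrying $G$, hence homotopic to $T$. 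Disjointness of the interiors comes from the chain being a ``string'' of lozenges that do not overlap in $\mathcal{O}$, so the torus is weakly embedded.

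\textbf{Step 3: maximal transversality and uniqueness.} Whenever two consecutive lozenges occupy adjacent quadrants at a shared corner, I remove that corner by the move in \Cref{fig:maximallytransverse}. Equivalently, I merge the two lozenges through the half-leaf, which shortens the chain. I iterate until no such pair remains. For uniqueness, I would show that the maximal chain of lozenges invariant under $G$ with no adjacent-quadrant corners is unique. Fenley proves that the set of corners fixed by $g$ forms a canonical chain, and the maximally transverse condition pins down exactly which corners are used. Two such tori are therefore built from the same lozenges, and Birkhoff annuli spanning the same lozenge are homotopic along flow lines.

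\textbf{Expected main obstacle.} The hardest part is Step 2: showing that the $g$-fixed chain is genuinely bi-infinite and $h$-invariant with non-overlapping lozenges. This needs the full Barbot–Fenley machinery, in particular that freely homotopic orbits are connected by lozenge chains and the embeddedness of $T$. In the paper I would simply cite Barbot–Fenley for this.
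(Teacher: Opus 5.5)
Your architecture is the paper's: a dichotomy between the suspension case (where $T$ goes to a fiber) and a $\pi_1 T$-invariant chain $\mathcal{C}$ of lozenges, Birkhoff annuli built over the lozenges, the move of \Cref{fig:maximallytransverse} to remove tangent orbits, and uniqueness by comparing projections to $\mathcal{O}$. Existence is cited in both. The problem is Step 3, which is the only part the paper actually argues.

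First, that move does not merge lozenges. Suppose $L_1, L_2$ share a side $s$ at the corner $x$ and have other corners $y_1, y_2$. Then $L_1 \cup s \cup L_2$ is a rectangle in which $y_1, y_2$ are not opposite corners, so it is not a lozenge. Nothing in $\mathcal{C}$ changes; only $x$ and its $\pi_1 T$-translates drop out of the projection of the lifted torus. So there is no ``chain with no adjacent-quadrant corners'' whose uniqueness you could prove. What is unique is $\mathcal{C}$ itself, and maximal transversality only dictates which of its corners appear.

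Second, the set of $g$-fixed corners is in general not a linear chain. Take $T$ a boundary torus of a periodic piece and $g$ its fiber. Then every lift, inside one lift of the piece, of an orbit of the spine $Z$ from \Cref{prop:periodicpiece} is $g$-fixed. The lozenges joining them form a tree that branches wherever three or more annulus ends of $Z$ meet an orbit. Your ``maximal $G$-invariant chain'' is then this whole tree, which is not the chain of any torus. The canonical object is the unique \emph{linear} chain invariant under all of $\pi_1 T$; in this example it is the axis of $h$ acting on the tree. This also affects Step 2: translating a chain from $x_0$ to $hx_0$ by powers of $h$ gives a line only if $x_0$ lies on that axis.

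Third, the sentence ``two such tori are therefore built from the same lozenges'' is exactly the converse the paper imports from \cite[Proposition 5.3.5]{BM25}. It states that any maximally transverse quasi-transverse torus $T''$ homotopic to $T$ has a lift whose projection is $\mathcal{C}$ together with precisely the corners not lying on shared sides. Getting this needs two ingredients. You must lift the homotopy so that $\widetilde{T''}$ is $\pi_1 T$-invariant. You must also know that the Birkhoff annuli of $T''$ lift onto lozenges, or unions of lozenges along shared sides, strung into a $\pi_1 T$-invariant linear chain. Uniqueness of $\mathcal{C}$ then identifies that chain. The flow-line homotopy should then be built between the two lifts equivariantly, not annulus by annulus over single lozenges, since after corner removal an annulus can span several lozenges.

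Two smaller points remain. You never address uniqueness in the suspension case. And your Step 2 reason for weak embeddedness (the lozenges of one chain do not overlap) is not enough: you must also control $\pi_1 M$-translates of the lozenges by elements outside $\pi_1 T$. This is where embeddedness of $T$ enters, as you concede at the end.
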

\begin{proof}
The first statement, without the specification on `maximally transverse', is proved in \cite[Theorem 6.10]{BF13}. See also \cite[Theorem 5.3.10]{BM25}.
The fact that one can arrange for maximally transverse is recorded in \cite[Remark 5.3.15]{BM25}.
In order to explain the second statement, let us briefly recap the proof.

The \textbf{orbit space} of $\phi$ is the quotient of the universal cover $\widetilde{M}$ by the lifted flow lines.
This is a space $\mathcal{O}$ homeomorphic to a plane with a pair of singular 1-dimensional foliations $\mathcal{O}^s$ and $\mathcal{O}^u$, see \cite[Chapter 1.3]{BM25}.
A \textbf{lozenge} in $\mathcal{O}$ is the interior of a properly embedded, trivially foliated, copy of $[0,1]^2 \backslash \{(0,0),(1,1)\}$.

One can show that either $\phi$ is a suspension Anosov flow, in which case $T$ can be homotoped into a fiber of the underlying Anosov mapping torus, uniquely up to homotopy along flow lines, or the action of $\pi_1 T$ on $\mathcal{O}$ preserves a unique linear chain $\mathcal{C}$ of lozenges.
For each lozenge $L$ in $\mathcal{C}$, one can construct a Birkhoff annulus $A$ with a lift $\widetilde{A}$ that projects down to $L$ and its two (non-ideal) corners.
Taking the union of such Birkhoff annuli over the $\pi_1 T$-orbits of lozenges in $\mathcal{C}$, one gets a quasi-transverse torus $T'$ homotopic to $T$ and with a lift $\widetilde{T'}$ that projects down to $\mathcal{C}$ and the corners of the lozenges. 
One can show that $T'$ is weakly embedded.

If two adjacent lozenges share a side, then they must occupy adjacent quadrants at a corner $x$. In turn, their corresponding Birkhoff annuli occupy adjacent quadrants at a boundary orbit, thus one can homotope $T'$ to remove one tangent orbit as described above.
After homotopy, the corner $x$ (and its $\pi_1 T$-translates) are removed from the projection of $\widetilde{T'}$. 
Repeating this procedure, one arranges for $T'$ to be maximally transverse, at which point the projection of $\widetilde{T'}$ is $\mathcal{C}$ and the corners of lozenges that do not lie along a shared side.

Conversely, if $T''$ is a maximally transverse quasi-transverse torus homotopic to $T$, then it will have a lift $\widetilde{T''}$ that projects down to $\mathcal{C}$ and the corners of the lozenges that do not lie along a shared side. See \cite[Proposition 5.3.5]{BM25}.
Thus $\widetilde{T'}$ and $\widetilde{T''}$ are homotopic along flow lines. 
Projecting the homotopy down to $M$, we see that $T'$ and $T''$ are homotopic along flow lines. This proves the second statement.
\end{proof}

\subsection{JSJ decomposition}

Let $M$ be an irreducible orientable closed 3-manifold. 
A \textbf{torus decomposition} of $M$ is a collection of embedded tori $\{T_1,...,T_n\}$ so that the 3-manifold $M \cut (\bigcup_{i=1}^n T_i)$ obtained by cutting $M$ along $\bigcup_{i=1}^n T_i$ is a disjoint union of torus (semi-)bundles, Seifert fibered manifolds, and atoroidal manifolds.
The \textbf{JSJ decomposition} of $M$ is the minimal torus decomposition of $M$. It is unique up to isotopy. 
See \cite{JS79, Joh79, NS97} for proofs.
We refer to the elements $T_i$ of the JSJ decomposition as the \textbf{JSJ tori}, and refer to the components of the cut 3-manifold $M \cut \bigcup_{i=1}^n T_i$ as the \textbf{JSJ pieces}, or just \textbf{pieces} for short.

We say that $M$ is a \textbf{graph manifold} if all of its JSJ pieces are Seifert fibered manifolds.
Note that our convention is that Seifert fibered manifolds are graph manifolds, while torus (semi-)bundles are \emph{not} graph manifolds. 

Now let $\phi$ be a pseudo-Anosov flow on $M$.
By \Cref{prop:quasitransversetori}, the JSJ tori can be homotoped into quasi-transverse tori. 
This allows us to focus on the restriction of $\phi$ to each JSJ piece of $M$.

We say that a Seifert fibered piece $P$ of $M$ is \textbf{periodic} if under some choice of Seifert fibering, the fibers are homotopic to a closed orbit of $\phi$.
Otherwise, we say that $P$ is \textbf{free}.
We say that $\phi$ is \textbf{totally periodic} if all JSJ pieces of $M$ are periodic Seifert fibered pieces.

Periodic Seifert fibered pieces admit the following description.

\begin{prop}[{\cite[Theorem F]{BF13}}] \label{prop:periodicpiece}
Let $P$ be a periodic Seifert fibered piece of $M$. Then there exists a union of Birkhoff annuli with disjoint interiors $Z$, so that a neighborhood of $Z$ is isotopic to $P$.
\end{prop}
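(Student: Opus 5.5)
The statement is cited from Barbot--Fenley, so the plan is to reconstruct the argument rather than invent a new one. Throughout I work in the orbit space $\mathcal{O}$ and use \Cref{prop:quasitransversetori}.

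First, set up. Let $h$ be a regular fiber of $P$, and choose the Seifert fibering so that $h$ is freely homotopic to a closed orbit $\alpha$ of $\phi$. Then $\pi_1 P$ is an extension of the base orbifold group by $\langle h\rangle$. Lift to $\widetilde M$ and fix a lift of $\alpha$. This gives a point $x_0\in\mathcal{O}$ fixed by some power of $h$. Since $\langle h\rangle$ is normal in $\pi_1 P$, the set $\mathrm{Fix}(h)$ is $\pi_1 P$-invariant.

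Second, show that the fixed points of $h$ are connected by a chain of lozenges. Standard orbit-space theory says that if a deck transformation fixes two points of $\mathcal{O}$, then they are connected by a chain of lozenges, and conversely that $h$ fixes every corner of every lozenge in the maximal such chain through $x_0$. So the union $\mathcal{C}$ of lozenges with corners in $\mathrm{Fix}(h)$ is a connected tree-like object, essentially the ``chain of lozenges'' tree. This union is $\pi_1 P$-invariant.

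Third, show the quotient is compact. Because $P$ is a JSJ piece, it is not itself a torus or Klein bottle bundle. So $\pi_1 P/\langle h\rangle$ is a hyperbolic orbifold group, acting on the tree dual to $\mathcal{C}$. Using the boundary JSJ tori (which by \Cref{prop:quasitransversetori} become quasi-transverse tori whose lozenge chains contain $h$-invariant lozenges), I would argue that this action is cocompact, with finitely many orbits of lozenges. A corner fixed by infinitely many lozenges would produce an essential torus inside $P$, contradicting atoroidality of the base.

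Fourth, build $Z$. For each $\pi_1 P$-orbit of lozenges in $\mathcal{C}$, build a Birkhoff annulus as in the proof of \Cref{prop:quasitransversetori}, whose lift projects to the lozenge and its corners. After homotopy along flow lines, these annuli have disjoint interiors, since distinct lozenges are disjoint. Their union $Z$ is a 2-complex glued along closed orbits (the images of corners).

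Finally, identify a regular neighborhood $N(Z)$ with $P$. $N(Z)$ is Seifert fibered, with fibers parallel to the closed orbits in the annuli, and $\pi_1 N(Z)\to\pi_1 P$ is an isomorphism by the cocompact tree action. Its boundary tori are homotopic to the JSJ tori bounding $P$. Uniqueness of the JSJ decomposition, together with Waldhausen's result that homotopic incompressible surfaces are isotopic, then gives that $N(Z)$ is isotopic to $P$.

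The main obstacle is the disjointness and embedding in the fourth step. Annuli from different lozenge orbits may intersect along flow segments. I would handle this by choosing the annuli inside a flow-invariant $\epsilon$-neighborhood of the union of closed orbits, and resolving intersections with the uniqueness-up-to-flow-homotopy part of \Cref{prop:quasitransversetori}.
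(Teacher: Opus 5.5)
The paper gives no proof of this statement; it is quoted from \cite[Theorem F]{BF13}. Your outline (fixed points of the fiber, the tree of lozenges, one Birkhoff annulus per orbit of lozenges, regular neighborhood) is a natural shape for such a proof. But it has to stand on its own, and it has a genuine gap at the step that carries the content of the theorem.

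\textbf{Finiteness (Step 3).} The key claim is that $\mathcal C$ has only finitely many $\pi_1 P$-orbits of lozenges. Equivalently, only finitely many closed orbits of $\phi$ have a lift fixed by $h$. This is what makes $Z$ a \emph{finite} union and $N(Z)$ compact. In Step 3 you only say you ``would argue'' cocompactness, and the reason you give does not work:
\begin{enumerate}
\item The scenario you exclude never arises. A point of $\mathcal O$ is a corner of at most one lozenge in each quadrant. The stabilizer of a corner is infinite cyclic and contains $h$, so it centralizes $h$ and lies in $\pi_1 P$. Hence every vertex stabilizer for $G=\pi_1P/\langle h\rangle$ is automatically finite.
\item There is no atoroidality to contradict: vertical tori over essential curves of the base orbifold are essential in $P$.
\end{enumerate}
The real danger is different. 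A finitely generated group acting on a locally finite tree with finite stabilizers can still have infinitely many orbits of edges, for instance infinite branches hanging off the minimal invariant subtree. Free homotopy classes of closed orbits can genuinely be infinite \cite{Fen94}. Ruling this out needs dynamical input specific to $P$, for example how the lozenge chains of the boundary tori from \Cref{prop:quasitransversetori} sit inside $\mathcal C$, together with compactness of $P$. You do not supply this, and without it Step 5 has nothing to work with.

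\textbf{Disjoint interiors (Step 4).} You flag this gap but do not resolve it. Disjointness of distinct lozenges of $\mathcal C$ only separates lifts of annuli that differ by elements of $\pi_1 P$. Whether the projected annuli meet in $M$ depends on how $\mathcal C$ meets its translates $g\mathcal C$ for $g\notin\pi_1 P$, and you give no orbit-space argument about these. Your proposed remedy cannot be carried out:
\begin{enumerate}
\item A Birkhoff annulus meets every orbit of its lozenge, and its lift is a connected strip joining two distinct lifted closed orbits. So it cannot be confined to a small neighborhood of the closed orbits.
\item The uniqueness in \Cref{prop:quasitransversetori} is about a single $\pi_1$-injective torus up to flow homotopy. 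It says nothing about separating different annuli.
\end{enumerate}

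\textbf{Identification with $P$ (Step 5).} You assert, rather than prove, that $\pi_1 N(Z)\to\pi_1P$ is an isomorphism and that $\partial N(Z)$ is homotopic to $\partial P$. These are more routine: Bass--Serre theory for the action of $\pi_1 P$ on the lozenge tree, using that corner and lozenge stabilizers are cyclic and lie in $\pi_1 P$, plus standard JSJ arguments. But they only go through once finiteness and embeddedness are established.
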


We also need the following fact about JSJ tori between periodic Seifert fibered pieces.

\begin{prop}[{\cite[Theorem B]{BF15}}] \label{prop:toribetweenperiodicpieces}
Let $T$ be a JSJ torus between two periodic Seifert fibered pieces of $M$. Then $T$ is isotopic to a partial cross section.
\end{prop}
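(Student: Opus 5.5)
The plan is to start from \Cref{prop:quasitransversetori} and show that the maximally transverse quasi-transverse representative of $T$ has no tangent orbits. First homotope $T$ to a transverse or maximally transverse weakly embedded quasi-transverse torus $T'$. If $T'$ is already transverse, only the isotopy upgrade at the end remains. So assume $T'$ contains at least one tangent orbit $\gamma$; the goal is to derive a contradiction from the periodicity of the pieces $P_1$ and $P_2$ on either side of $T$.

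\textbf{Step 1: both fibers fix the corners of the chain.} Pass to the orbit space $\mathcal{O}$. There $\pi_1 T \cong \mathbb{Z}^2$ preserves the linear chain of lozenges $\mathcal{C}$ given by the proof of \Cref{prop:quasitransversetori}, and $\gamma$ lifts to a corner $x$ of $\mathcal{C}$. Let $h_i \in \pi_1 T$ be the class of the regular fiber of $P_i$, viewed in the peripheral subgroup. By \Cref{prop:periodicpiece}, $P_i$ is a neighborhood of a union $Z_i$ of Birkhoff annuli, so $\partial P_i$, and hence $T$, is homotopic to a torus built from Birkhoff annuli of $Z_i$. By the uniqueness statement in \Cref{prop:quasitransversetori}, the corners of $\mathcal{C}$ are therefore lifts of boundary orbits of annuli in $Z_i$. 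Those boundary orbits are the periodic orbits representing (powers of) the fiber. I would argue this way that $h_i$, or a fixed power of it, fixes every corner of $\mathcal{C}$, and in particular fixes $x$.

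\textbf{Step 2: contradiction.} By minimality of the JSJ decomposition, the fibers of $P_1$ and $P_2$ do not match up on $T$, so $h_1$ and $h_2$ generate a finite index subgroup $\mathbb{Z}^2 \le \pi_1 T$. By Step 1, this $\mathbb{Z}^2$ would fix the point $x \in \mathcal{O}$. However, the stabilizer of a point of $\mathcal{O}$ is cyclic: it is generated by a root of the closed orbit through that point, and any nontrivial element fixing $x$ acts on the prongs of $x$ by expansion and contraction. This is the contradiction, so $T'$ has no tangent orbits, and by \Cref{prop:quasitransversetori} it is a transverse torus homotopic to $T$.

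\textbf{Step 3 (the main obstacle): from homotopy to isotopy.} The transverse torus $T'$ must be shown to be embedded and isotopic to $T$. Since $T'$ is a union of Birkhoff annuli with disjoint interiors and no boundary orbits, weak embeddedness already gives embeddedness. It is then a $\pi_1$-injective embedded torus homotopic to the JSJ torus $T$ in an irreducible manifold, so it is isotopic to $T$ by Waldhausen's theorem. I expect the delicate point to be Step 1, namely justifying that the chain $\mathcal{C}$ coincides with the chain coming from the spine $Z_i$. The first approach I would try is to compare both chains via the uniqueness in \Cref{prop:quasitransversetori}, since the boundary tori of a regular neighborhood of $Z_i$ are quasi-transverse along orbits of $Z_i$.
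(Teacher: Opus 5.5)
The paper gives no argument for this statement; it quotes it from Barbot--Fenley, so your proposal has to stand on its own. Your Step~2 is sound and is the right mechanism. The classes $h_1,h_2$ are non-commensurable in $\pi_1 T$, powers of both have fixed points in $\mathcal{O}$, and point stabilizers are cyclic, so no point of $\mathcal{O}$ is fixed by powers of both.

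\textbf{Gap in Step~1.} The uniqueness clause of \Cref{prop:quasitransversetori} only compares transverse or \emph{maximally} transverse representatives. The torus $T_i''$ obtained by retracting the relevant component of $\partial N(Z_i)$ into $Z_i$ need not be maximally transverse. In fact it never is here. Since the statement is true, $T$ is homotopic to a transverse torus, so every tangent orbit of $T_i''$ is removable. Moreover, the chains $\mathcal{C}_1,\mathcal{C}_2$ coming from $Z_1,Z_2$ are two \emph{different} $\pi_1 T$-invariant chains, because their corners are fixed by non-commensurable elements. Together with their shared sides, both fill out the same scalloped region.

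Consequently, no argument that ignores your standing assumption (that $T'$ has a tangent orbit) can establish ``$\mathcal{C}$ coincides with the chain coming from $Z_i$''. Run for $i=1,2$, it would give a contradiction outright. For the same reason you cannot lean on the uniqueness of the invariant chain asserted in the sketch of \Cref{prop:quasitransversetori}: it fails exactly in this situation.

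The correct intermediate claim is weaker:
\begin{enumerate}
\item First remove from $T_i''$ every tangent orbit at which consecutive annuli occupy adjacent quadrants.
\item The surviving tangent orbits are vertex orbits of $Z_i$. Their lifts in the $\pi_1 P_i$-invariant component of the preimage of $Z_i$ are fixed by powers of $h_i$.
\item Only now does uniqueness apply. It identifies, $\pi_1 T$-equivariantly, the tangent orbits of $T'$ with these surviving orbits.
\end{enumerate}
That is exactly what Step~2 needs.

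\textbf{Gap in Step~3.} \Cref{prop:quasitransversetori} is a homotopy statement. Weak embeddedness is a condition on a decomposition into Birkhoff annuli, and a transverse torus has no such decomposition. So your sentence about ``Birkhoff annuli with no boundary orbits'' is vacuous, and nothing about embeddedness of the transverse representative follows. This is a genuine issue: distinct lifts of an immersed transverse torus can project to overlapping regions of $\mathcal{O}$, and they must be positioned consistently along flow lines.

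The cleanest fix avoids both this and Waldhausen. Work directly with the embedded torus $T_1\subset\partial N(Z_1)$, which is isotopic to $T$ by \Cref{prop:periodicpiece}. Near each vertex orbit, $T_1$ runs through the sector between two consecutive annuli $A,A'$ of $Z_1$.
\begin{itemize}
\item If $A,A'$ occupy adjacent quadrants, a local isotopy makes $T_1$ transverse there. The flow crosses both push-offs and the small arc around the orbit in the same direction.
\item If not, that orbit survives maximalization of $T_1''$. By uniqueness, the maximalization of $T_2''$ then has the same tangent orbit with the same lift. So this lift is fixed by powers of both $h_1$ and $h_2$, which contradicts Step~2.
\end{itemize}
Hence $T_1$, and therefore $T$, is isotopic to a transverse torus.
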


Before stating the description for free Seifert fibered pieces, we set up some terminology.

\begin{defn}[Flows on 3-manifold with boundary] \label{defn:flowmanifoldwithboundary}
A \textbf{flow} on a 3-manifold with boundary $P$ is a continuous map $\phi: K \to P$ defined on a compact subset $K \subset \mathbb{R} \times P$ that contains $\{0\} \times P$, satisfying $\phi(0,x) = x$ and $\phi(s,\phi(t,x)) = \phi(s+t,x)$ for every $s,t \in \mathbb{R}, x \in P$ for which both sides of the equations are defined.

We define the \textbf{incoming boundary} $\partial_- P$ to be the set of points $x \in \partial P$ where $\phi(t,x)$ is not defined for $t < 0$, and the \textbf{outgoing boundary} $\partial_+ P$ to be the set of points on $x \in \partial P$ where $\phi(t,x)$ is not defined for $t > 0$.

For all flows in this paper, $\partial_- P$ and $\partial_+ P$ will be subsurfaces with boundary of $\partial P$, and the \textbf{tangential boundary} $\partial_0 P$, which we define to be the complement of the interiors of $\partial_- P$ and $\partial_+ P$, will either be a union of curves or a union of annuli.
\end{defn}

\begin{eg}[Geodesic flow]
Let $\Sigma$ be a hyperbolic orbifold, possibly non-orientable and possibly with boundary. The \textbf{unit tangent bundle} of $\Sigma$ is the Seifert fibered space $T^1 \Sigma = \{v \in T \Sigma \mid ||v|| = 1\}$ with base orbifold $\Sigma$. The \textbf{geodesic flow} on $T^1 \Sigma$ is defined by $\phi(t,\dot{\alpha}(0)) = \dot{\alpha}(t)$ for every unit speed geodesic $\alpha$.
In this case, both the incoming and outgoing boundary consists of one annulus, while the tangential boundary consists of two closed orbits per boundary torus of $T^1 \Sigma$.
See \Cref{fig:freepiece} left for a local picture near a boundary torus.
\end{eg}

\begin{figure}
    \centering
    \fontsize{10pt}{10pt}
    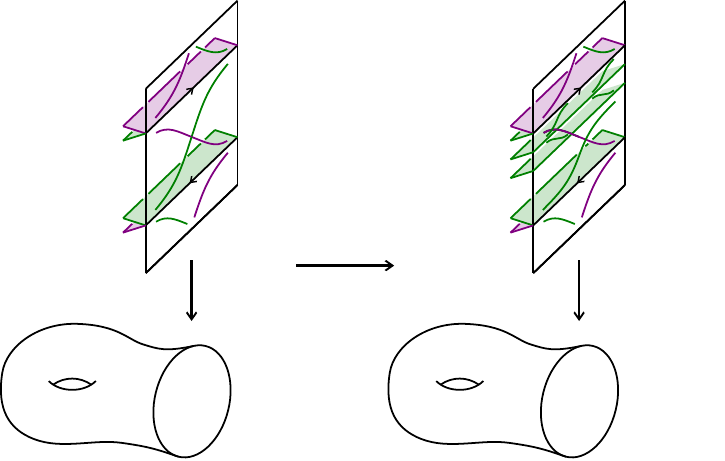
    \caption{Local picture of a hyperbolic blow up of the geodesic flow.}
    \label{fig:freepiece}
\end{figure}

Two flows $\phi_1$ and $\phi_2$ on are \textbf{orbit equivalent} if there is a homeomorphism sending the oriented flow lines of $\phi_1$ to that of $\phi_2$, while not necessarily preserving their parametrizations.
The flow $\phi_1$ is \textbf{orbit semi-equivalent} to $\phi_2$ if there is a continuous map $f$ sending the oriented flow lines of $\phi_1$ locally homeomorphically to that of $\phi_2$, while not necessarily preserving their parametrizations, and such that every flow line of $\phi_2$ intersects the image of $f$.
This terminology is based on the notion of conjugacy and semi-conjugacy in dynamical systems.

\begin{prop}[{\cite[Theorem 7.1]{BF21}}] \label{prop:freepiece}
Let $P$ be a free Seifert fibered piece of $M$.
Then up to homotoping $P$, the restriction of $\phi$ to $P$ is orbit semi-equivalent to a finite fiberwise cover of the geodesic flow of a (not necessarily orientable) hyperbolic orbifold $\Sigma$ with geodesic boundary. 
\end{prop}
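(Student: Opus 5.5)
Since this is \cite[Theorem 7.1]{BF21}, I would reconstruct it along Gromov's classical semi-conjugacy argument for geodesic flows, adapted to a Seifert piece with boundary.

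\textbf{Positioning $P$.} I would first apply \Cref{prop:quasitransversetori} to each boundary torus of $P$. This makes $\partial P$ a union of weakly embedded, maximally transverse quasi-transverse tori. After homotoping $P$ slightly along flow lines, the restricted flow then has incoming, outgoing and tangential boundary as in \Cref{defn:flowmanifoldwithboundary}.

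\textbf{Setting up the comparison.} Let $h \in \pi_1 P$ be the class of the regular fiber; it is central and of infinite order. Let $\Gamma = \pi_1 P/\langle h\rangle = \pi_1^{\orb}(\Sigma)$. Here the base orbifold $\Sigma$ is given a hyperbolic structure with geodesic boundary, which is possible because $P$ is a JSJ piece with nonempty boundary, or is hyperbolic-based. Since $P$ is free, $h$ is not homotopic to any closed orbit. Hence $h$, and every power of $h$, acts on the orbit space $\mathcal{O}$ without fixed points; by a standard argument it also preserves no lozenge chain.

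\textbf{Key step: orbits are uniform quasigeodesics in the base.} Fix a $\pi_1 P$-equivariant projection $\pi: \widetilde P \to \widetilde\Sigma \subset \mathbb{H}^2$. I would show that for every lifted orbit $\widetilde\gamma$ in $\widetilde P$, the path $\pi(\widetilde\gamma)$ is a uniform quasigeodesic. I expect this to be the main obstacle, and would argue by contradiction with compactness.

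Suppose there were long orbit segments whose projections backtrack or stay in a bounded region of $\widetilde\Sigma$. Using compactness of $P$ and the closing lemma, \Cref{prop:closing}, one could then produce closed orbits in $P$ whose free homotopy classes have bounded image in $\Gamma$. Passing to finite index, this gives a closed orbit freely homotopic to a power of $h$, contradicting freeness.

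The orbits near the tangential boundary need separate treatment. They lie along Birkhoff annuli, which project to quasigeodesics tracking the geodesic boundary of $\Sigma$, so they are controlled directly. I would also need the Anosov-type expansion to rule out orbits that drift slowly along fibers.

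\textbf{Constructing the map.} Uniform quasigeodesicity gives, for each lifted orbit, a pair of distinct endpoints in $\partial\mathbb{H}^2$. These endpoints lie in the limit set of $\Gamma$, and the associated geodesic lies in $\widetilde\Sigma$. I would build a map $\widetilde f: \widetilde P \to T^1\widetilde\Sigma$ in three steps:
\begin{itemize}
\item send $x$ to the geodesic joining the endpoints of its orbit;
\item choose the basepoint by nearest-point projection of $\pi(x)$ onto that geodesic;
\item average along the orbit over a fixed time window, so the map becomes monotone along orbits and hence carries flow lines to geodesic flow lines locally homeomorphically after reparametrization.
\end{itemize}
The map $\widetilde f$ is $\pi_1 P$-equivariant, with $h$ acting trivially on $T^1\widetilde\Sigma$. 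It therefore descends to a map from $P$ to the $k$-fold fiberwise cover of $T^1\Sigma$. Here $k$ is determined by how $h$ compares with the fiber of $T^1\Sigma$ on $\pi_1$: the map is fiber-preserving up to homotopy and winds $k$ times around the fibers.

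\textbf{Surjectivity.} Finally, every geodesic flow line must be hit. The map is homotopic to a fiber-preserving covering of Seifert fibered spaces, hence has nonzero degree relative to the boundary, so it is surjective. Alternatively, every pair of limit points is realized as the endpoints of some orbit, by a density argument using the closed orbits produced by the closing lemma. Either way, every geodesic flow line meets the image, which completes the orbit semi-equivalence.
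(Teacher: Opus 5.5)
\textbf{There is a genuine gap in the construction of the map, and the key step is not established.}

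You are trying to reprove \cite[Theorem 7.1]{BF21} from scratch. The paper does not do this. It quotes from \cite{BF21} that, after homotoping $P$, $\phi|_P$ is orbit equivalent to a hyperbolic blow-up of a finite fiberwise cover of the geodesic flow of $\Sigma$. It then composes with the collapsing map $\chi_\Gamma$ of \cite[Section 4.3.3]{BF21} from the blow-up to the geodesic flow, and checks that the non-elementary hypothesis can be dropped.

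The gap is in the construction of $\widetilde f$. Since $P$ has boundary, $\phi|_P$ has orbits that enter through $\partial_- P$ and exit through $\partial_+ P$ in finite time, and orbits that are only half-infinite in $P$. Their lifts project to compact arcs or rays in $\widetilde\Sigma$. So they have no pair of endpoints in $\partial\mathbb{H}^2$, there is no geodesic to project onto, and near $\partial_\pm P$ there is no time window to average over. Thus $\widetilde f$ is only defined on orbits that stay in $P$ for all time.

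Extending it to the transient orbits while still sending flow lines to flow lines forces you to send $\partial_\pm P$ into $\partial_\pm T^1\Sigma$ compatibly with both flows. But a maximally transverse boundary torus of $P$ may carry more tangent orbits than the two tangential orbits on a boundary torus of $T^1\Sigma$. That mismatch is what the hyperbolic blow-up in \cite{BF21} encodes, and it is why one only gets a semi-equivalence. Your relative-degree surjectivity argument presupposes $f(\partial P)\subset\partial T^1\Sigma$, which is exactly this unarranged boundary behaviour. The density alternative at best reaches the geodesics that stay in $\Sigma$ forever, not the transient ones.

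The key step is also not established by what you write. Failure of uniform quasigeodesicity means sublinear progress or backtracking. This need not produce any long orbit segment whose projection stays in a bounded region, so your contradiction argument addresses only one way the property can fail. Getting from \emph{no long segment stays in a bounded set} to uniform quasigeodesicity needs a real additional input, which you leave as an unspecified \emph{Anosov-type expansion}.

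Even in the bounded case, \emph{bounded image in $\Gamma$} only restricts the image to finitely many elements, possibly hyperbolic ones. So \emph{passing to finite index} does not give a closed orbit homotopic to a power of $h$. To force trivial image you would have to pigeonhole in $\pi^{-1}(B)/\langle h\rangle$.

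Finally, your justification that $\Sigma$ is hyperbolic fails for the twisted $I$-bundle over the Klein bottle. It is a JSJ piece with nonempty boundary but Euclidean base orbifold. It must be handled separately, as the paper does when removing the non-elementary hypothesis of \cite{BF21}.
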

\begin{proof}
In \cite{BF21}, it is shown that up to homotoping $P$, the restriction of $\phi$ to $P$ is orbit equivalent to a `hyperbolic blow up' along boundary closed orbits of a finite fiberwise cover of the geodesic flow of a hyperbolic orbifold with geodesic boundary $\Sigma$.
See \Cref{fig:freepiece} for a local picture.
The exact definition for hyperbolic blow up is unimportant for this paper.
The only fact we will need is that there is an orbit semi-equivalence from the hyperbolic blow up to the original flow. This is the map $\chi_\Gamma$ constructed in \cite[Section 4.3.3]{BF21}.

Also, the statement in \cite{BF21} has a hypothesis that $P$ is \textbf{non-elementary}, i.e. $\pi_1 P$ does not contain a free abelian group of finite index. 
This is used to rule out the case when the entire flow $\phi$ is a suspension Anosov flow:
If $M$ is an Anosov mapping torus, the convention of JSJ decomposition in \cite{BF21} is that one has to cut $M$ along a torus fiber, resulting in a free Seifert fibered piece.
However, in our convention, the JSJ decomposition of $M$ is empty.

In fact, for us, the only elementary Seifert fibered JSJ pieces are the closed flat 3-manifolds and the twisted $I$-bundle over the Klein bottle. 
But the former cannot admit pseudo-Anosov flows by \cite[Corollary 3.2.2]{BM25} and the latter must be free by \Cref{prop:quasitransversetori}, so we can omit the non-elementary hypothesis.
\end{proof}

\section{Recognition of totally periodic flows}

The main goal of this section is to establish a criterion for a pseudo-Anosov flow being almost equivalent to a totally periodic flow or a suspension Anosov flow (\Cref{prop:surtototallyperiodicrecog}).
To that end, we first prove a criterion for recognizing totally periodic flows in \Cref{subsec:bookofIbundles}.
Actually, we will prove a more general criterion for recognizing books of I-bundles (\Cref{prop:bookofIbundlesrecog}), in anticipation of future applications.

\subsection{Book of I-bundles} \label{subsec:bookofIbundles}

Recall the terminology for flows on 3-manifolds with boundary from \Cref{defn:flowmanifoldwithboundary}.

\begin{eg}[Product]
Let $S$ be a surface, possibly with boundary. We refer to $P = S \times [0,1]$ along with the flow $\phi(s,(x,t)) = (x,s+t)$ as a \textbf{product}. 
Under this flow, the incoming boundary $\partial_- P = S \times \{0\}$, the outgoing boundary $\partial_+ P = S \times \{1\}$, and the tangential boundary $\partial_0 P = \partial S \times [0,1]$.
\end{eg}

\begin{eg}[Round handles]
Consider the vector field $\widetilde{X} = x \frac{\partial}{\partial x} - y \frac{\partial}{\partial y} + \frac{\partial}{\partial z}$ on the set $\widetilde{P} = \{(x,y,z) \in \mathbb{R}^3 \mid |x| \leq 1, |y| \leq 1, |xy| \leq \frac{1}{2} \}$.
It descends to a vector field $X$ on the quotient $P = \widetilde{P} / (x,y,z) \sim (x,y,z+1)$.
Let $\phi$ be the flow on $P$ generated by $X$.
Under this flow, the incoming boundary $\partial_- P$ consists of two annuli, the outgoing boundary $\partial_+ P$ consists of two annuli, and the tangential boundary $\partial_0 P$ consists of four annuli.
See \Cref{fig:roundhandle}.
We refer to $P$ with the flow $\phi$ as a \textbf{round handle}.

\begin{figure}
    \centering
    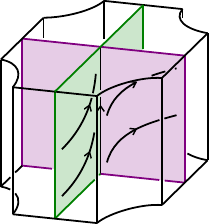
    \caption{The prototypical round handle.}
    \label{fig:roundhandle}
\end{figure}

More generally, we refer to any finite branched cover of $P/(x,y,z) \sim (-x,-y,z)$ which branches over the core orbit $\{(0,0,z) \mid z \in \mathbb{R}/\mathbb{Z}\}$ with degree at least $2$, along with the lifted flow, as a \textbf{round handle} as well.
\end{eg}

\begin{defn}[Book of I-bundles]
A \textbf{book of I-bundles} is a 3-manifold with boundary obtained by homeomorphically gluing up the tangential boundaries of a disjoint union of products and round handles, in a way so that flow segments are glued to flow segments, along with the glued up flow.
In particular, every tangential boundary component has to be glued up, so a book of I-bundles has no tangential boundary.

Note that we allow for the possibility of no products or no round handles in the construction of a book of I-bundles.
In particular, a product with no tangential boundary is a book of I-bundles.
\end{defn}

\begin{prop} \label{prop:bookofIbundlesrecog}
Let $\phi$ be a pseudo-Anosov flow on $M$. Suppose $\phi$ admits a partial cross section $S$, and suppose that set of flow lines that are disjoint from $S$ is a finite collection of closed orbits. Then $\phi$ admits a partial cross section $S' \supset S$ such that the restriction of $\phi$ to $M \cut S'$ is a disjoint union of books of I-bundles.

Moreover, if every component of $S$ is a torus, then every component of $S'$ can be arranged to be a torus as well.
\end{prop}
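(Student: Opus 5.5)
Let $\mathcal{C}=\{\gamma_1,\dots,\gamma_m\}$ be the finite collection of closed orbits disjoint from $S$, and let $N = M \cut S$ with the restricted flow. The plan is to show that $N$ already has almost the structure of a book of I-bundles. Every flow line of $N$ other than those along $\mathcal{C}$ enters through $\partial_- N$ and leaves through $\partial_+ N$. The orbits of $\mathcal{C}$ are the only obstructions, and near each one the flow looks like a round handle.

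\textbf{Step 1: uniform return time away from $\mathcal{C}$.} Choose small disjoint tubular neighborhoods $\nu_i$ of the $\gamma_i$, each cut by the local stable and unstable leaves into quadrants. I claim that on $K = M \setminus \bigcup \intr(\nu_i)$ the time to hit $S$, forwards and backwards, is bounded. Suppose not. Then there are points of $K$ whose orbit segments of arbitrarily long length avoid $S$. By compactness, a limit point $x$ has an entire half-orbit disjoint from $\intr(S)$. Its $\omega$-limit set then contains long recurrent segments avoiding $S$ (using that $S$ is transverse, hence open-ish in the flow direction). By \Cref{prop:closing}, there are closed orbits shadowing these segments, and they must be disjoint from $S$, since a closed orbit meeting $S$ transversally would force the shadowed segment to cross $S$ too. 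Therefore these closed orbits lie in $\mathcal{C}$, so the segments lie near $\mathcal{C}$, giving a contradiction for segments staying in $K$.

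One also has to handle orbits that spend a long time in $\nu_i$ but are not on $\gamma_i$. Such orbits lie near the stable or unstable leaf of $\gamma_i$. A half-leaf $W^s(\gamma_i)\setminus\gamma_i$ or $W^u(\gamma_i)\setminus\gamma_i$ does not meet $S$ in its half-orbit only if it converges to another $\gamma_j$. This would produce a finite chain of orbits connecting orbits of $\mathcal{C}$ and lying outside $S$. Those connecting orbits are not closed, which contradicts the hypothesis. Hence every stable and unstable half-leaf of each $\gamma_i$ meets $S$.

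I expect this step to be the main obstacle: making the closing-lemma compactness argument rigorous, including near singular orbits, where \Cref{prop:closing} requires $K$ to avoid them.

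\textbf{Step 2: build $S'$.} For each $\gamma_i$, take an annulus $A_i \subset \nu_i$ around $\gamma_i$ (or, for singular $\gamma_i$, a union of such pieces), transverse to the flow near $\partial\nu_i$ and crossing all quadrants. Isotope it along flow lines so that it is disjoint from $S$. Actually, it is simpler to choose $S' = S \cup \bigcup_i T_i$, where $T_i$ is a torus in $\nu_i$ built by tubing. I would instead take $S'=S$ together with parallel copies of components of $S$ if needed. The key claim is that $M\cut S$ decomposes as follows. Let $R_i$ be the set of points whose orbit segment in $N$ passes through the small neighborhood $\nu_i$, thickened by the flow. Step 1 shows that $R_i$ is a flowbox neighborhood of $\gamma_i$ bounded by pieces of $\partial_\pm N$ and by flow-saturated annuli along stable and unstable leaves. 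Hence $R_i$ is a round handle, or a branched cover of one at a singular or twisted orbit. The complement of $\bigcup R_i$ in $N$ has bounded flow time, so it is an $I$-bundle over a compact surface, i.e.\ a product. The tangential boundaries are glued to each other, so $N$ is a disjoint union of books of I-bundles. The extra components of $S'$ are only needed to separate components, which requires parallel copies of $S$. Here $S'$ is obtained from $S$ by adding parallel copies of components of $S$ along the flow, which are transverse and disjoint. This preserves the torus condition, proving the ``moreover'' clause.
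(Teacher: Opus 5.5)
Your route differs from the paper's and can be made to work, but as written one claim is false and the step that carries the weight is missing.

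\textbf{Comparison with the paper.} The paper never analyses a whole component $P$ of $M \cut S$ at once. For a single closed orbit $\gamma \subset P$, it takes the circles $c^s$ where the stable half-leaves of $\gamma$ meet $\partial_- P$. It replaces thin annuli $A_- \subset \partial_- P$ around these circles by the annuli $A_P$ running alongside the half-leaves. The resulting transverse surface $R$ cuts off a thin book containing only $\gamma$, and the paper inducts on the number of closed orbits; the orbit-free case is a product. Only the stable leaf of one orbit ever has to be controlled. The price is adding new surfaces and checking that they are tori. You instead keep $S'=S$, so the parallel copies are unnecessary and the ``moreover'' clause is automatic. You claim each component of $M \cut S$ is already a book, with one round handle per closed orbit it contains. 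That claim is true, and it is slightly sharper than the statement, but your argument does not yet establish it.

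\textbf{Step 1 is false as stated.} Take a point on the stable leaf of $\gamma_i$ after its last crossing of $S$ but outside $\nu_i$. It lies in $K$ and never hits $S$ in forward time. What you actually use is weaker: $S$-avoiding orbit segments \emph{contained in} $K$ have bounded length. This needs no closing lemma, so your worry about singular orbits disappears. If such segments had lengths tending to infinity, take a limit of their midpoints. Its complete orbit stays in $K$, and by transversality of the closed surface $S$ it misses $S$. By hypothesis it is then some $\gamma_i$, contradicting $\gamma_i \cap K = \emptyset$.

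\textbf{The saturations $R_i$ are not shown to be embedded and pairwise disjoint.} You need that no orbit segment of $N$ passes through $\nu_i$ and later through $\nu_j$, or through $\nu_i$ twice. Without this, the $R_i$ need not be round handles, and the pieces need not meet only along tangential boundaries. So ``the tangential boundaries are glued to each other'' is unsupported.

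The missing input is the absence of homoclinic and heteroclinic connections inside $N$. Let $X_i$ be $\gamma_i$ together with the orbits of $N$ that converge to $\gamma_i$, in forward or backward time, without crossing $S$. By continuity of first hitting times, $X_i$ is compact and ends on $\partial_\pm N$. Suppose an orbit lay in the unstable part of $X_i$ and the stable part of $X_j$, with $i=j$ allowed. It would be a non-closed flow line missing $S$, contradicting the hypothesis. Hence the $X_i$ are pairwise disjoint, and the stable and unstable parts of each $X_i$ meet only along $\gamma_i$.

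Now choose $\nu_i$ of the form $\{|xy| \le \epsilon\}$ in local coordinates, with $\epsilon$ small. Then every orbit through $\nu_i$ enters near the local stable leaf and exits near the local unstable leaf. It stays near $X_i$ until it leaves $N$, so it can neither revisit $\nu_i$ nor reach $\nu_j$. This yields embedded, pairwise disjoint round handles. With these two repairs your argument becomes a complete proof.
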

\begin{proof}
Consider the restriction of $\phi$ to the cut manifold $M \cut S$. 
If a component $P$ of $M \cut S$ contains no closed orbits, then we claim that it is a product.

To see this, note that the flow line starting at each point $x$ on the incoming boundary $\partial_- P$ exits through the outgoing boundary $\partial_+ P$ at some finite time $t(x)$.
Indeed, otherwise the flow line has an accumulation point.
If the accumulation point lies on a singular orbit, then the whole singular orbit lies in the accumulation set, thus in $P$.
If the accumulation point lies away from the singular orbits, then one can construct a closed orbit contained in $P$ by applying \Cref{prop:closing} near the accumulation point.
But then the flow $\phi$ must map $\{(x,t) \in \partial_- P \times \mathbb{R} \mid t \in [0,t(x)]\}$ homeomorphically to $P$, certifying that it is a product.

If $P$ contains a closed orbit $\gamma$, we first claim that each flow line on the component of the stable leaf of $\gamma$ restricted to $P$ must enter from $\partial_- P$ at some finite time.
Suppose otherwise, then some such flow line $\ell$ never exits $P$ in backward time. But since $\ell$ lies on the stable leaf of $\gamma$, it spirals into $\gamma$ thus is non-closed, and it never exits $P$ in forward time as well.
This contradicts our hypothesis that $S$ intersects every non-closed flow line.

This implies that each stable half-leaf of $\gamma$ is an annulus with one boundary component on $\gamma$ and another boundary component being a circle on $\partial_- P$.
Let $N$ be a thin neighborhood of the union of stable half-leaves of $\gamma$. The boundary of $N$ is the union of two collections of annuli, one collection $A_-$ lying on $\partial_- P$ while the other collection $A_P$ lies within $P$.
Let $R = (S \backslash A_-) \cup A_P$. See \Cref{fig:bookofIbundlesrecog}.
Up to a small isotopy, we can make $R$ transverse to $\phi$.

\begin{figure}
    \centering
    \fontsize{10pt}{10pt}
    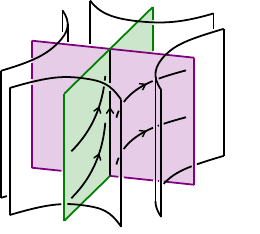
    \caption{In each component $P$ of $M \cut S$ that contains a closed orbit, we can construct a new surface $R$ that cuts $P$ into a book of I-bundles and a piece containing one less closed orbit.}
    \label{fig:bookofIbundlesrecog}
\end{figure}

Cutting along $R$ disconnects $P$. On one side we have a book of I-bundles, obtained by gluing up the product $(\partial_- P \cut c^s) \times I$ and the round handle that is a neighborhood of $\gamma$. On the other side we have a 3-manifold with boundary containing one less closed orbit of $\phi$.
Thus repeating this process, we can decompose $P$ into a disjoint union of book of I-bundles.

It remains to justify the last statement of the proposition. If every component of $S$ is a torus, then in the construction above, $\partial_- P$ is a union of tori. The circles in $c^s$ must each be essential, since otherwise $\gamma$ is a nullhomotopic closed orbit of $\phi$. Thus $\partial_- P \cut c^s$ is a disjoint union of annuli and tori, and $R$ is a union of tori.
\end{proof}

\subsection{Surgery to totally periodic flows}

We are now ready to prove our criterion for being almost equivalent to a totally periodic flow or a suspension Anosov flow.

\begin{prop} \label{prop:surtototallyperiodicrecog}
Let $\phi$ be a pseudo-Anosov flow. Suppose $\phi$ admits a partial Birkhoff section $S$ satisfying:
\begin{itemize}
    \item every component of $S$ has genus one,
    \item every boundary component of $S$ has index $-1$, and
    \item the set of flow lines that are disjoint from $S$ is a finite collection of closed orbits.
\end{itemize}
Then $\phi$ is almost equivalent to totally periodic flow or a suspension Anosov flow.
\end{prop}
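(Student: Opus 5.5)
We first do Goodman-Fried surgery along every boundary orbit of $S$, as described in the subsection on partial Birkhoff sections. Each boundary component $c$ has $\ind(c) = -1$, so the slope $s$ of $S$ meets the degeneracy slope in $p = 2$ points. The surgered flow $\phi'$ on $M'$ is therefore pseudo-Anosov, and each filled orbit is $2$-pronged, i.e.\ a regular orbit. Moreover, $\phi'$ is almost equivalent to $\phi$. The surgery turns $S$ into a partial cross section $R$ whose components are the components of $S$ with each boundary circle collapsed to a point. Since every component of $S$ has genus one, every component of $R$ is a closed genus one surface, i.e.\ a torus.

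Next we check the hypothesis of \Cref{prop:bookofIbundlesrecog} for $R$. Away from the filled orbits, $\phi'$ agrees with $\phi$ on the complement of the boundary orbits of $S$. So any flow line of $\phi'$ disjoint from $R$ is either a filled orbit, or corresponds to a flow line of $\phi$ disjoint from $\intr(S)$. The latter are, by hypothesis, among the finitely many closed orbits of $\phi$ disjoint from $S$. The filled orbits meet $R$ at the added points, so in fact they are not disjoint from $R$. Hence the set of flow lines of $\phi'$ disjoint from $R$ is a finite collection of closed orbits. Applying \Cref{prop:bookofIbundlesrecog}, including its "moreover" clause, we get a partial cross section $R' \supset R$ consisting of tori such that $M' \cut R'$ is a disjoint union of books of I-bundles.

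Finally we identify $\phi'$. There are two cases.

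\emph{Case 1: the books of I-bundles contain no round handles.} Then each component of $M' \cut R'$ is a product $T \times [0,1]$ over a torus. Gluing these products back along $R'$ exhibits $\phi'$ as a suspension flow of a homeomorphism of a torus. Since $\phi'$ is pseudo-Anosov, the monodromy must be Anosov, so $\phi'$ is a suspension Anosov flow.

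\emph{Case 2: there is at least one round handle.} Each book of I-bundles is a Seifert fibered piece: products $A \times I$ over annuli and round handles glue up along tangential annuli, and the closed orbits at the round handle cores are homotopic to the fibers. Taking the tori of $R'$ together with the boundaries of these pieces gives a torus decomposition of $M'$ into Seifert fibered pieces in which fibers are homotopic to closed orbits. We then argue that the JSJ decomposition is obtained from this one by deleting tori, and that each resulting JSJ piece remains periodic. Hence $M'$ is a graph manifold and $\phi'$ is totally periodic.

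The main obstacle is this last step. We must make sure that no piece is a torus (semi-)bundle piece, which would fall into Case 1 instead. We must also show that merging adjacent pieces across inessential or non-JSJ tori preserves periodicity, i.e.\ that the Seifert fibrations match along such tori with fibers still homotopic to closed orbits. For this I would use \Cref{prop:quasitransversetori} and the uniqueness of Seifert fiberings on non-elementary pieces to see that the fibers in adjacent merged pieces coincide.
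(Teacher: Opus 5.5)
Your proposal is correct and follows essentially the same route as the paper. You perform Goodman--Fried surgery along the index $-1$ boundary orbits to turn $S$ into a partial cross section by tori, apply \Cref{prop:bookofIbundlesrecog} with its torus clause to cut the surgered manifold into books of I-bundles, and conclude suspension Anosov if every piece is a product and totally periodic otherwise.

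The paper handles the step you flag as the main obstacle only briefly: it discards parallel product components and asserts that the JSJ pieces are then Seifert fibered with the round-handle cores as fibers. Your explicit check that the surgered section still satisfies the hypothesis of \Cref{prop:bookofIbundlesrecog} is detail the paper leaves implicit, and so is your plan via uniqueness of Seifert fiberings, which applies because any piece containing a round handle has base orbifold of negative Euler characteristic.
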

\begin{proof}
Since every boundary component of $S$ has index $-1$, we can perform Goodman-Fried surgery to make $S$ into a partial cross section. 
After surgery, it remains true that every component of $S$ has genus one. 
Thus we can apply \Cref{prop:bookofIbundlesrecog} to extend $S$ so that the restriction of $\phi$ to $M \cut S$ is a disjoint union of books of I-bundles.
If every component of $M \cut S$ is a product, then $\phi$ is a suspension Anosov flow.
Otherwise, up to discarding parallel components, we can assume that no component of $M \cut S$ is a product.

Since each component of $S$ is a torus, each component of $M \cut S$ is Seifert fibered. Thus the JSJ pieces of $M$ are all Seifert fibered.
(But note that we do not claim that $M \cut S$ are exactly the JSJ pieces of $M$.)
Moreover, since the core orbit of each round handle in $M \cut S$ is a fiber, each JSJ piece of $M$ is periodic, thus the flow is totally periodic.
\end{proof}

\section{Partial sections in free Seifert pieces}

In this section, we will construct partial Birkhoff sections in finite fiberwise covers of geodesic flows.
Here, we extend the notion of partial Birkhoff sections to flows on manifolds with boundary with the same definition:
A \textbf{partial Birkhoff section} to a flow $\phi$ is an immersed, possibly disconnected, surface with boundary where the interior is embedded and transverse to $\phi$, and the boundary lies along orbits of $\phi$.

The objective is to show the following proposition.

\begin{prop} \label{prop:freepiecepartialsection}
Let $\Sigma$ be a hyperbolic orbifold, possibly non-orientable and possibly with boundary.
Let $\phi$ be a $k$-fold fiberwise cover of the geodesic flow of $\Sigma$. 
Then there exists a partial Birkhoff section $S$ satisfying:
\begin{itemize}
    \item every component of $S$ has genus one,
    \item every boundary component of $S$ has index $-1$, and
    \item $S$ intersects every flow line except for a finite collection of closed orbits.
\end{itemize}
\end{prop}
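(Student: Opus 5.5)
We plan to build $S$ by cutting $\Sigma$ into simple pieces, constructing a genus-one section over each piece, and then lifting to the $k$-fold fiberwise cover. First we reduce to a convenient base: pass to a finite orbifold cover if needed, or better, decompose $\Sigma$ along a maximal collection of disjoint simple closed geodesics $\alpha_1,\dots,\alpha_m$ (together with the boundary geodesics) into pairs of pants, and cone/orbifold analogues of pants such as disks with two cone points, annuli with one cone point, and pieces with mirror/corner reflector boundary in the non-orientable or reflector case. Over each geodesic $\alpha_i$, the flow has two closed orbits, $\pm\dot\alpha_i$, in $T^1\Sigma$. These will serve as boundary orbits. In the $k$-fold fiberwise cover, each lifts to closed orbits, and the fiber direction over $\alpha_i$ gives well-defined slopes. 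The finitely many missed closed orbits will be among these lifts of $\pm\dot\alpha_i$ and the orbits over boundary geodesics.

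The core step is a ``basic block'' over each piece $Q$ of the decomposition. Following the idea behind \cite{Deh26}, we take a surface in $T^1 Q$ made from unit vectors pointing in a prescribed transverse direction along a filling family of geodesic arcs. Concretely, choose geodesic arcs in $Q$ cutting it into disks (or orbifold disks). Over each arc $a$, take the annulus-like strip of vectors $v$ based on $a$ with $v$ crossing $a$ positively (a Birkhoff-type surface, transverse to the geodesic flow except where $v$ is tangent to $a$). Then resolve the intersections of these strips at arc endpoints and crossings by the usual cut-and-paste of transverse surfaces (Fried's desingularization), with boundaries placed on the orbits $\pm\dot\alpha_i$. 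Every geodesic not equal to some $\alpha_i$ eventually crosses one of the arcs or leaves $Q$ through some $\alpha_i$ and enters an adjacent piece. Either way it meets $S$, so the only missed lines are the $\pm\dot\alpha_i$ and possibly the closed geodesics contained in the boundary. We then compute the slope of $S$ along each $\pm\dot\alpha_i$. For the geodesic flow, the degeneracy slope there is the fiber-transverse direction given by the stable leaf, the horocyclic lift. Arranging $S$ to wrap once around the fiber gives geometric intersection $p=2$, hence index $-1$. In the $k$-fold cover, we lift and choose the multiplicity of boundary wrapping so that the index stays $-1$. If necessary we take a different number of sheets over each arc to make the lifted slope primitive with $p=2$.

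Next we verify the topology using \eqref{eq:indPH}. Since each boundary component has index $-1$, a component $S_j$ with $b_j$ boundary components and interior singularities of total index $I_j$ satisfies $\chi(S_j) = -b_j + I_j$. Genus one means $\chi(S_j)=-b_j$, so we need $S_j$ to contain no interior singular points. This is automatic because the geodesic flow is Anosov, which is why the target is genus exactly one. Thus genus one follows once connectivity and boundary structure are understood, and each component must satisfy $\chi=-b$. We will check this by designing each basic block to be a once-punctured torus or torus with holes, for instance a tube around each pants curve assembled from the strips. We would verify it by an Euler characteristic count of the strips and the resolution.

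The main obstacle is the explicit local construction: getting the slopes at every boundary orbit to have intersection number exactly $2$ with the degeneracy slope, uniformly for cone points, reflector edges, and arbitrary $k$, while keeping the interior embedded. We would first treat orientable $\Sigma$ with $k=1$ via pants, then handle cone points by taking $S$ as the image of a section for a finite manifold cover that is invariant under deck transformations. That invariance may fail, in which case we would instead build bespoke blocks around each cone point and reflector edge. Finally, we would deal with $k$ by noting that the fiberwise cover changes slopes by a factor $k$ in the fiber direction, and compensating by taking $k$ parallel copies, or a single $k$-fold wrapping copy, of each block.
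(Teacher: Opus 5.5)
There is a genuine gap. The basic block, which you yourself flag as the main obstacle, is never constructed, and the construction you sketch does not give a partial Birkhoff section.

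\textbf{The strips do not have boundary on closed orbits.} The set of vectors based on a geodesic arc $a$ and crossing it positively is a rectangle. Its boundary consists of fiber arcs over the endpoints of $a$ and arcs of tangent vectors to $a$, i.e.\ segments of \emph{non-closed} orbits. Fried's desingularization only resolves double curves; it does not move this boundary onto the closed orbits $\pm\dot\alpha_i$. Birkhoff--Fried type surfaces need closed geodesics, and then the boundary indices are dictated by the crossing pattern rather than chosen.

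\textbf{Boundary on $\pm\dot\alpha_i$ is itself problematic.} The stable/unstable framing at $\dot\alpha_i$ coincides with the framing given by the torus $T^1\Sigma|_{\alpha_i}$. So a boundary component of index $-1$ must spiral once around $\dot\alpha_i$, and hence enters the unit tangent bundles of \emph{both} adjacent pieces. Blocks therefore cannot be built independently inside $T^1Q$, and you do not address disjointness of neighbouring blocks.

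\textbf{Coverage fails as stated.} A geodesic crossing an arc negatively misses the strip over it. The step ``it leaves $Q$ through $\alpha_i$, so it meets $S$'' has no justification.

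\textbf{The $k$-fold cover.} A ``$k$-fold wrapping copy'' has index $-k$. The preimage keeps index $-1$ only if each boundary component lifts homeomorphically, which you do not ensure.

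The idea you are missing is to put the boundary on an orbit in the \emph{interior} of each block, and let the orbits over the cutting curves be exactly the missed ones. In the paper, each pair of pants carries a closed geodesic $\alpha$ with three self-intersections. Its complement is two triangles and three regions $\Sigma_0,\Sigma_1,\Sigma_2$ around the boundary curves. The lift $\gamma$ of $\alpha$ bounds a once-punctured torus $S_1$ formed by the unit tangent vectors of two families of convex loops: two hexagons over the triangles, glued along fiber arcs over the double points. Convexity gives transversality, and $\chi(S_1)=-1$ gives index $-1$.

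For coverage, a geodesic missing $S_1$ can only pass from $\Sigma_i$ to $\Sigma_j$ with $i>j$. So the three cyclic permutations $S_1,S_2,S_3$, made disjoint by a small isotopy along flow lines, together hit every orbit not lying over $\partial Q$. The same construction works with boundary curves replaced by cone points. Cutting along curves and arcs between order-$2$ cone points reduces any $\Sigma$ to such pieces, and blocks over different pieces are then automatically disjoint.

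For the cyclic $k$-fold cover, $\partial S_1$ is a commutator in $\pi_1(S_1)$, hence trivial in the deck group. It therefore lifts homeomorphically, and index $-1$ survives.

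Your Poincar\'e--Hopf remark is correct: index $-1$ boundaries force genus one when there are no singular orbits.
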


The strategy of the proof is as follows:
We first explicitly construct a partial Birkhoff section $S$ in the case where $\Sigma$ is a pair of pants and $k=1$.
This $S$ is homeomorphic to three once-punctured tori, and intersects every flow line except for the closed orbits lying over the boundary components of $\Sigma$. 

In fact, the same construction works even after capping off the boundary components of $\Sigma$ by cone points, i.e. if $\Sigma$ is an annulus with one cone point, or a disc with two cone points, or a sphere with three cone points.
We refer to this list of orbifolds as the \textbf{basic orbifolds}.

Since $S$ is a once-punctured torus, we can take its lift to get a genus one partial Birkhoff section even for other values of $k$.
The lifted partial Birkhoff section still has the same property of intersecting every flow line except for the boundary closed orbits.
In particular, this proves \Cref{prop:freepiecepartialsection} in the cases where $\Sigma$ is a basic orbifold.
This result will be recorded as \Cref{lem:basicblock}.

To prove \Cref{prop:freepiecepartialsection} for general $\Sigma$, we decompose $\Sigma$ into basic orbifolds, then place a partial Birkhoff section above each basic orbifold as in \Cref{lem:basicblock}.
The union of these partial Birkhoff sections will intersect every flow line except for the closed orbits lying above the decomposition curves and arcs.

\subsection{Basic orbifolds} \label{subsec:basicblock}

Let $\Sigma$ be a hyperbolic pair of pants. 
We label the boundary components of $\Sigma$ as $c_0, c_1, c_2$.
Let $\alpha$ be the oriented geodesic on $\Sigma$ depicted in \Cref{fig:freepiecesectioncurve}.
Note that it has three self-intersection points, each lying on the orthogeodesic between two boundary components.
It has five complementary regions, two being triangles and the remaining three each containing one boundary component.
We denote the latter three complementary regions as $\Sigma_0$, $\Sigma_1$, $\Sigma_2$ according to which boundary component they contain.
Let $\gamma$ be oriented lift of $\alpha$.
We will show that $\gamma$ bounds three partial Birkhoff sections $S_1, S_2, S_3$, each being a once-punctured torus.

\begin{figure}
    \centering
    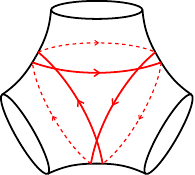
    \caption{The oriented geodesic $\alpha$ used to construct a basic section.}
    \label{fig:freepiecesectioncurve}
\end{figure}

To construct $S_1$, we topologically identify $\Sigma$ with the surface $(\mathbb{R}/\mathbb{Z} \times [0,2]) \backslash B_{\frac{1}{4}}(0,1)$, with $c_0 = \mathbb{R}/\mathbb{Z} \times \{0\}$, $c_1 = \partial B_{\frac{1}{4}}(0,1)$, and $c_2 = \mathbb{R}/\mathbb{Z} \times \{2\}$.
We identify the geodesic $\alpha$ with the red oriented curve shown in \Cref{fig:freepiecesectionbasic} top.
We also identify the self-intersection point of $\alpha$ lying on the orthogeodesic between $c_0$ and $c_2$ with the point $x = (\frac{1}{2},1)$.

We then pick two families of convex oriented loops based at $x$, illustrated as the yellow loops in \Cref{fig:freepiecesectionbasic} top.
The (unitized) tangent vectors of each family of loops form a hexagonal surface that projects homeomorphically onto a triangular complementary region of $\alpha$ on $\Sigma$. 
Three of the six sides of each hexagon lie along the orbit $\gamma$. 
These sides come from segments of the loops limiting onto $\alpha$ as oriented geodesic segments.

Each of the remaining three sides are arcs that lie above each of the three self-intersection points of $\alpha$.
These come from points on the loop limiting towards the self-intersection points.
Importantly, observe that these arcs coincide for the two hexagons.
We depict these arcs in \Cref{fig:freepiecesectionbasic} top as an interval of unit tangent vectors at each self-intersection point.
We also illustrate a local picture around these arcs in $T^1 \Sigma$ in \Cref{fig:freepiecesectionbasic} bottom. The pictures labeled $x$ and $y$ are the arcs lying above the points $x$ and $y$ respectively in \Cref{fig:freepiecesectionbasic} top.

\begin{figure}
    \centering
    \fontsize{10pt}{10pt}
    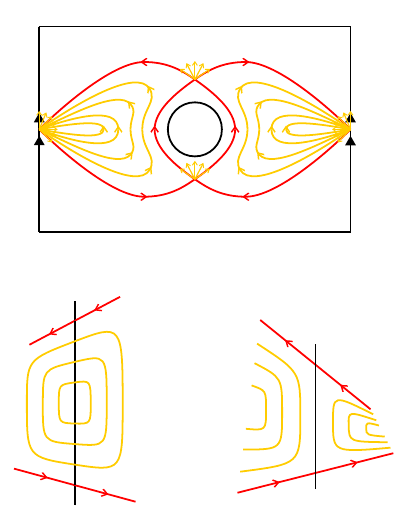
    \caption{Constructing one component $S_1$ of a basic section.}
    \label{fig:freepiecesectionbasic}
\end{figure}

The union of the two hexagons and the three arcs where they coincide gives us the partial Birkhoff section $S_1$.
Here, transversality of the interior of $S_1$ to the flow follows from the convexity of the loops.
It is straightforward to check that $S_1$ is a once-punctured torus. 
From \Cref{eq:indPH}, one then computes that the index of $\gamma$ as a boundary component of $S_1$ to be $-1$.

This is essentially the same construction as in \cite[Sections 4a, 4b]{Deh26}, where these partial Birkhoff sections are called `butterfly surfaces'.

\begin{claim} \label{claim:missedflowlines}
If $\ell$ is a flow line that misses $S_1$, then either $\ell$ lies over a boundary component $c_i$, or
    \begin{itemize}
        \item $\ell$ enters from, or limits in backward time on, a boundary torus lying above $c_i$, and
        \item $\ell$ exits at, or limits in forward time on, a boundary torus lying above $c_j$.
    \end{itemize}
    for some $i > j$.
\end{claim}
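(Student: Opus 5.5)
We prove the claim by projecting to $\Sigma$ and reading off which geodesics can avoid $S_1$ from the topology of the picture in \Cref{fig:freepiecesectionbasic} top. Let $\ell$ be a flow line, and let $\beta$ be its projection to $\Sigma$. Then $\beta$ is a geodesic, either complete or ending at $\partial\Sigma$. If $\beta$ is a boundary geodesic $c_i$, we are done. If $\beta=\alpha$, then $\ell=\gamma$ or its reverse. The reverse of $\gamma$ crosses the hexagons, since it is transverse to them, so we may assume $\ell$ is neither of these. The key observation is a local description of $S_1$. A tangent vector $v$ based at a point $p$ in one of the two triangular regions lies on $S_1$ if and only if $v$ is tangent to one of the convex loops through $p$. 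Moreover, since the two families of loops sweep out the triangles, every direction at $p$ in an open arc of directions is realized. Similarly, over each self-intersection point of $\alpha$, the arc of $S_1$ consists of the directions in the interval drawn in the figure.

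The main step is to show that any geodesic $\beta \neq \alpha$ that enters a triangular region must be tangent to one of the loops there, or pass through a self-intersection point in a direction lying in the drawn interval. We would argue that $\beta$ meets a triangle in a geodesic segment $\sigma$ crossing between two of its sides. We then compare $\sigma$ with the convex loops of the family filling that triangle. The loops degenerate on one end to the three sides of the triangle traversed along $\alpha$'s orientation, and on the other end to the point $x$ and the other self-intersection points. By an intermediate value argument on the family parameter, some loop is tangent to $\sigma$ with matching orientation, because the loops turn consistently, by convexity. This gives the intersection with $S_1$. The orientation matters: a segment $\sigma$ crossing the triangle gets tangency from one of the two families, and the two families are arranged so that together they handle all crossing directions. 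We expect this to be the main obstacle, namely checking that the two hexagons together with the three arcs cover every crossing direction. We would make it concrete using the explicit model $(\mathbb{R}/\mathbb{Z}\times[0,2])\backslash B_{1/4}(0,1)$ and the orientation of $\alpha$.

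Consequently a missing line's projection $\beta$ avoids both triangles. It also crosses $\alpha$ only at self-intersection points in directions outside the drawn intervals, or crosses $\alpha$ in the non-triangle sides. So $\beta$ travels among $\Sigma_0,\Sigma_1,\Sigma_2$, crossing arcs of $\alpha$ that separate these regions only in allowed directions. We would then check from the orientation of $\alpha$ that the allowed crossings form an acyclic order: a geodesic can pass from $\Sigma_i$ to $\Sigma_j$ without meeting $S_1$ only when $i>j$. Iterating, $\beta$ visits each region at most once, with decreasing index. Inside $\Sigma_i$, which is an annulus around $c_i$, a geodesic that does not leave must, in that time direction, hit $c_i$ or spiral to it, which is the boundary torus limit. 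Therefore in backward time $\ell$ enters from or limits on the torus above some $c_i$, and in forward time it exits at or limits on the torus above some $c_j$. The monotonicity gives $i>j$, and $i\neq j$ because $\beta$ must leave $\Sigma_i$, as otherwise it would be the closed geodesic $c_i$ itself.
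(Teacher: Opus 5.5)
There is a genuine gap. Your main step, that a missed line never enters a triangle, is false, and the argument you sketch for it rests on a misreading of the construction.

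Each family of loops fills a \emph{different} triangle, and its hexagon projects homeomorphically onto that triangle. So over a point $p$ of a triangle exactly one direction lies on $S_1$, namely the tangent to the unique loop through $p$. There is no open arc of directions at $p$, and the two families cannot ``together'' take care of all crossings.

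A geodesic segment $\sigma$ crossing a triangle is indeed tangent to some loop, but possibly with the \emph{opposite} orientation, and then it does not meet $S_1$ there. Convexity only says that a positively oriented tangency is a transverse intersection; it does not produce one. Since the loops are nested convex loops through $x$, the extremal loop touched by $\sigma$ lies on the same side of $\sigma$ as $x$. So the sign of the tangency is decided by which side of $\sigma$ contains $x$, i.e.\ by the pair of sides through which $\sigma$ enters and exits.

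Such negative crossings do occur. The orthogeodesic from $c_0$ to $c_2$ meets $\alpha$ only at $x$ and runs from $c_0$ to $c_2$, so by the claim it cannot be a missed line. Hence its direction at $x$ must lie on the arc of $S_1$ over $x$. The endpoints of this arc are the two directions of $\alpha$ at $x$, and these separate the two directions of the orthogeodesic, so the arc avoids the reversed direction. Since $S_1$ is close to this arc near $x$, a geodesic close to the reversed orthogeodesic that cuts through a corner of a triangle next to $x$ crosses that triangle without meeting $S_1$.

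Relatedly, every edge of $\alpha$ separates a triangle from one of $\Sigma_0,\Sigma_1,\Sigma_2$, and these regions touch each other only at the double points. So there are no ``non-triangle sides'': apart from passages exactly through a double point, every move between the $\Sigma_i$ goes across a triangle. The whole content of the claim is which such crossings are blocked, and that is what has to be proved, as the paper does:
\begin{enumerate}
\item A geodesic entering a triangle from $\Sigma_0$ is forced to become positively tangent to a loop before exiting.
\item A passage through a double point from $\Sigma_0$ into $\Sigma_1$ meets the arc of $S_1$ over it. Together with the previous item, $\Sigma_0$ is never left in forward time.
\item One cannot get from $\Sigma_1$ into $\Sigma_2$, either across a triangle or through a double point.
\item The time-reversed statements hold with $\Sigma_2$ in place of $\Sigma_0$.
\end{enumerate}
The moves that survive, some of which cross triangles, only decrease the index. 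Your final step about the annular regions and $i\neq j$ then goes through.
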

\begin{proof}
Let $\ell$ be a flow line that misses $S$.
Let $\beta$ be the geodesic line on $\Sigma$ that $\ell$ projects down to.

Observe that $\beta$ cannot cross $\alpha$ from $S_0$ into a triangle, for otherwise it must be positively tangent to a loop in \Cref{fig:freepiecesectionbasic} top at some point before exiting the triangle, that is, it must intersect $S$.
Similarly, $\beta$ cannot cross $\alpha$ through a self-intersection point from $S_0$ to $S_1$.
This reasoning shows that once $\beta$ enters $S_0$, it cannot leave again, thus it must exit through or limit onto $c_0$.
The symmetric fact is true in backward time with $S_2$ replaced by $S_0$.

Likewise, $\beta$ cannot cross from $S_1$ to $S_2$, nor from $S_1$ to $x$ through a triangle.
Thus once $\beta$ leaves $S_1$, it can only exit through or limit onto $c_0$.
Again, the symmetric fact is true in backward time with $c_2$ replaced by $c_0$.

The conclusion is that either $\beta$ stays within a complementary region $S_i$, in which case it must lie over $c_i$, or $\beta$ must enter through or limit onto $c_i$ in backward time, and exit through or limit onto $c_j$ in forward time, for some $i > j$.
\end{proof}

To construct $S_2$ and $S_3$, we cyclically permute the roles of $c_0, c_1, c_2$.
It is straightforward to check that $S_1, S_2, S_3$ have disjoint interiors away from the fibers over the self-intersection points of $\alpha$, from their descriptions as tangent vectors to a family of loops, see \Cref{fig:freepiecesectiondisjoint} left.
In \Cref{fig:freepiecesectiondisjoint} right, we illustrate the positioning of $S_1, S_2, S_3$ near a fiber lying above a self-intersection point $z$.
We see that two of the surfaces overlap along an interval on the fiber, but this interval of intersection can be removed by an small isotopy along flow lines. 
Thus up to this isotopy, we can assume that $S_1, S_2, S_3$ have disjoint interiors everywhere.

Once this is arranged, the union $S = S_1 \cup S_2 \cup S_3$ is a partial Birkhoff section consisting of three once-punctured tori.

\begin{figure}
    \centering
    \fontsize{10pt}{10pt}
    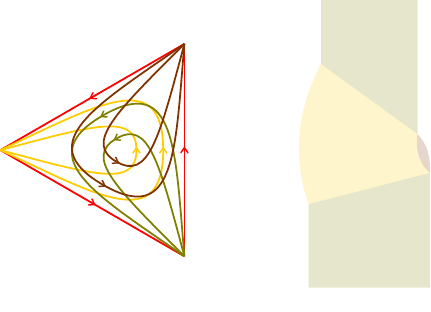
    \caption{The three partial Birkhoff sections $S_1, S_2, S_3$ have disjoint interiors away from the self-intersection points (left) and near the fiber lying above an intersection point $z$ (right).}
    \label{fig:freepiecesectiondisjoint}
\end{figure}

The corresponding versions of \Cref{claim:missedflowlines} holds for $S_2$ and $S_3$, thus $S$ intersects every flow line except for the closed orbits lying over the boundary components of $\Sigma$. 

\begin{lem} \label{lem:basicblock}
Let $\Sigma$ be a hyperbolic orbifold homeomorphic to
\begin{itemize}
    \item a pair of pants,
    \item an annulus with one cone point,
    \item a disc with two cone points,
    \item a sphere with three cone points.
\end{itemize}
Let $\phi$ be a $k$-fold fiberwise cover of the geodesic flow of $\Sigma$. 
Then there exists a partial Birkhoff section $S$ satisfying:
\begin{enumerate}
    \item every component $S$ has genus one,
    \item every boundary component of $S$ has index $-1$, and
    \item $S$ intersects every flow line except for the closed orbits lying over the boundary components of $\Sigma$. 
\end{enumerate}
\end{lem}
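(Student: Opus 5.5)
The plan is to reduce every case to the pair of pants construction with $k=1$ already carried out above, and then to pass to the $k$-fold fiberwise cover by lifting.

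\textbf{Step 1: the pair of pants with $k=1$.} This case is exactly $S = S_1 \cup S_2 \cup S_3$ as constructed above. Each $S_i$ is a once-punctured torus whose boundary lies along $\gamma$, and by \Cref{eq:indPH} that boundary component has index $\chi(S_i) = -1$, using that there are no interior singular points since the geodesic flow is Anosov. By \Cref{claim:missedflowlines} and its cyclic permutations, a flow line missing all three $S_i$ must satisfy one of two conditions.
\begin{itemize}
    \item It lies over some $c_i$.
    \item It goes from $c_i$ to $c_j$ with $i>j$, and also (after the cyclic relabelings) from $c_{i'}$ to $c_{j'}$ with $i' > j'$ in each permuted ordering.
\end{itemize}
Checking the three cyclic orderings $0<1<2$, $1<2<0$, $2<0<1$, no ordered pair $(i,j)$ with $i \neq j$ is decreasing in all three, and $i=j$ is excluded by the strict inequality. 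So only the closed orbits over the $c_i$ are missed.

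\textbf{Step 2: cone points.} Replace a boundary geodesic $c_i$ by a cone point $p_i$. The curve $\alpha$ still exists as a closed geodesic with the same combinatorics: three self-intersections and complementary regions $\Sigma_i$ that are now once-punctured discs around the cone points. I would rerun the construction of the hexagons verbatim, since the convex loops based at $x$ only live in the triangles and near $\alpha$, and never need to approach the $c_i$. The argument of \Cref{claim:missedflowlines} also goes through, with "exits through or limits onto $c_i$" replaced by "stays in $\Sigma_i$ forever". A geodesic trapped forever in a disc-with-cone-point region of a hyperbolic orbifold must be the closed orbit over the cone point's fiber. In $T^1\Sigma$ the exceptional fiber over $p_i$ consists of flow lines through the cone point, which do cross out of $\Sigma_i$, so they are not trapped. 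Hence the missed lines are finitely many closed orbits; this matches (3), reading "boundary components" as the empty set at cone points.

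\textbf{Step 3: the $k$-fold cover.} Take $\pi: N \to T^1\Sigma$ the fiberwise cover and $\widetilde{S} = \pi^{-1}(S)$. Its interior is embedded and transverse to the lifted flow, and its boundary lies along the lifts of $\gamma$. Each component of $\widetilde S$ is a cover of a once-punctured torus $S_i$, possibly branched only at the puncture, so it could have higher genus. To control this I would instead take a single lift: the map $\pi_1(S_i) \to \pi_1(T^1\Sigma) \to \mathbb{Z}/k$ classifying the cover should be computed. If it is nontrivial, the components have genus greater than one, and the fix is to use a suitable section-like lift. Alternatively, first modify $S_i$ so that its fiber-class intersection is controlled.

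Since every flow line of $N$ projects to a flow line of $T^1\Sigma$, it misses $\widetilde S$ iff its image misses $S$, giving (3). Indices are preserved under covers that are unbranched near $\gamma$, since the degeneracy slope and the section slope lift compatibly, giving (2).

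\textbf{Main obstacle.} I expect the main obstacle to be genus control in Step 3. What I would try first is to observe that $S_i$ meets each fiber only along the three arcs over self-intersection points. I would then argue that the restriction of the covering class to $\pi_1(S_i)$ is trivial, because the generators of $\pi_1(S_i)$ are loops made of tangent vectors to convex loops whose total fiber rotation cancels. In that case $\pi^{-1}(S_i)$ is $k$ disjoint copies of a once-punctured torus.
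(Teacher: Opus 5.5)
Your Step 1 agrees with the paper; the explicit check that no ordered pair is decreasing in all three cyclic orders is a fine way to combine the three versions of the claim. Steps 3 and 2, however, each have a genuine gap.

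In Step 3 you misidentify the obstacle, and the statement you then try to prove is false in general. Nontrivial monodromy does not raise the genus. The basic orbifolds are orientable, so the fiber class is central and the $k$-fold fiberwise cover is cyclic. Hence the homomorphism $\pi_1(\intr S_i)\to\mathbb{Z}/k$ classifying $\pi^{-1}(\intr S_i)\to\intr S_i$ factors through $H_1$. It therefore kills the boundary loop of the once-punctured torus, which is a commutator. So every boundary component lifts homeomorphically, and a connected $d$-fold cover of $S_i$ has $\chi=-d$ and $d$ boundary circles, i.e.\ it is a $d$-punctured torus. This is the paper's one-line argument, and it is also what actually justifies your index claim for (2).

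By contrast, the triviality of the restricted monodromy that you hope to get from ``cancelling fiber rotation'' can fail. For the pants, $T^1\Sigma\cong\Sigma\times S^1$, and a fiberwise cover may be twisted by an arbitrary homomorphism $\pi_1\Sigma\to\mathbb{Z}/k$. Meanwhile $\pi_1(S_1)$ contains a loop projecting to a peripheral loop of $\Sigma$: go from the arc over one crossing of $\alpha$ to the arc over an adjacent crossing through one hexagon, and return through the other. The monodromy of that loop can be made nonzero.

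In Step 2, the assertion that $\alpha$ keeps the same combinatorics is false when the replaced $c_i$ becomes a cone point of order $2$; this can occur, at most once, for hyperbolic basic orbifolds. For each $i$, $\alpha$ is freely homotopic to $y\,(g y^{-1} g^{-1})$, where $y$ is a loop around $c_i$ and $g$ is suitable. For example, with peripheral generators $a,b$ and $ab$ also peripheral, $\alpha\simeq ab^{-1}a^{-1}b$. If $y$ becomes a half-turn, then $y^{-1}=y$, so $\alpha$ is a product of half-turns about two distinct lifts of the cone point, and its axis passes through both. Thus the closed geodesic degenerates to a geodesic arc from the cone point to itself, traversed back and forth. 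The two triangles and the region around $c_i$ disappear, so there are no hexagons to build and your ``verbatim'' construction has nothing to run on.

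The paper treats this case separately. With $c_1$ the cone point, $S_1$ becomes the set of unit tangent vectors along $\alpha$ pointing from $\Sigma_0$ to $\Sigma_2$, and $S_2,S_3$ the ones pointing back. One then checks directly that $S_1$ is still a once-punctured torus: the fiber over the cone point cuts it into an octagon.

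Your remark about ``the closed orbit over the cone point's fiber'' is also confused, since no such orbit exists. The right reason nothing is trapped in a cone-point region is that the region is compact and locally convex, so no complete geodesic stays in it forever.
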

\begin{proof}
We have already proved the case when $\Sigma$ is a pair of pants and $k=1$, where $S$ is in fact a once-punctured torus.
Actually, the same construction works for the other orbifolds in the list --- one simply replaces the boundary components by cone points.

Some words might be warranted in the case when there is a cone point of order 2.
First of all, since we are assuming that $\Sigma$ is hyperbolic, there can be at most one such cone point.
Without loss of generality, suppose we replace the boundary component $c_1$ by this cone point $c$.
In this case, the oriented geodesic $\alpha$ degenerates into a geodesic arc connecting $c$ to itself, traversed back-and-forth.
There are now just two complementary regions $\Sigma_0$ and $\Sigma_2$.
The partial Birkhoff section $S_1$ degenerates into the collection of unit tangent vectors that point from $\Sigma_0$ to $\Sigma_2$ along $\alpha$.
See \Cref{fig:freepiecesectioncone}, where the picture on the right is drawn for the cover where $c$ is lifted to a regular point $\widetilde{c}$.
The fiber over $c$ intersects $S_1$ in two arcs, cutting it into a octagon. 
From this, one can check that $S_1$ remains a once-puncture torus.

\begin{figure}
    \centering
    \fontsize{10pt}{10pt}
    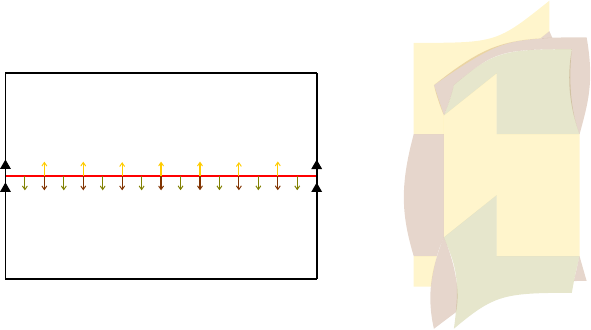
    \caption{The partial Birkhoff sections $S_1, S_2, S_3$ when the boundary component $c_1$ is replaced with a cone point $c$ of order 2. The picture on the right is drawn for the cover where $c$ is lifted to a regular point $\widetilde{c}$.}
    \label{fig:freepiecesectioncone}
\end{figure}

Similarly, both $S_2$ and $S_3$ degenerate into the collection of unit tangent vectors that point from $\Sigma_2$ to $\Sigma_0$ along $\alpha$.
See \Cref{fig:freepiecesectioncone} again. 

For $k > 1$, we can simply take the preimage of the partial Birkhoff section constructed for the $k=1$ case. Since the cover is cyclic, each once-punctured torus lifts to a union of (possibly multiply) punctured tori, with each boundary component lifting homeomorphically.
Item (2) follows.
Item (3) is also preserved under fiberwise covers.
\end{proof}

In the sequel, we will refer to the list of orbifolds in the statement of \Cref{lem:basicblock} as the \textbf{basic orbifolds}, and refer to a partial Birkhoff section as in \Cref{lem:basicblock} as a \textbf{basic section}.

\subsection{Assembling basic sections} \label{subsec:assembleblocks}

In this subsection, we will prove \Cref{prop:freepiecepartialsection} by assembling the basic sections of \Cref{lem:basicblock}.

\begin{lem} \label{lem:cutintobasicorbifolds}
Every hyperbolic orbifold $\Sigma$ can be cut along curves and arcs between cone points of order 2 into basic orbifolds.
\end{lem}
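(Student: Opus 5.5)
The plan is to first handle orientable $\Sigma$, and then reduce the non-orientable case to it by cutting along arcs between cone points of order $2$ as well as curves. Write $\Sigma$ as having genus $g$, $b$ boundary components, $n$ cone points, and possibly reflector structure. In this paper's setting we take $\Sigma$ to be the base of the unit tangent bundle, so cone points are the only orbifold singularities. For a non-orientable base, the fiber orientation issue is absorbed by working with the surface as an orbifold. The basic orbifolds are exactly those with $|\chi^{\mathrm{top}}$-type complexity$| $ minimal, namely underlying surface a sphere with three total ``ends'', where an end means a boundary component or a cone point. So the goal is a pants-type decomposition in which cone points are treated as boundary components.

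Concretely, let $\Sigma'$ be $\Sigma$ with a small disc removed around each cone point. In the orientable case, $\Sigma'$ is a compact orientable surface of genus $g$ with $b+n$ boundary components. Hyperbolicity of $\Sigma$ means the orbifold Euler characteristic is negative. I would first treat the case where $\Sigma'$ itself has $\chi(\Sigma')<0$. Then the standard pants decomposition gives a family of disjoint essential simple closed curves cutting $\Sigma'$ into pairs of pants. Re-inserting the cone points, each pair of pants becomes a basic orbifold: a pants, an annulus with one cone point, a disc with two cone points, or a sphere with three cone points. Each curve can be realized as a simple closed geodesic on $\Sigma$. This works provided no decomposition curve bounds a disc containing at most one cone point, or a disc containing exactly two cone points of order $2$. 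The first is excluded because the curves are essential in $\Sigma'$ and not boundary-parallel. The second is handled below.

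The exceptional situations are where the arcs enter. First, $\Sigma'$ may have $\chi(\Sigma')\ge 0$, which forces a sphere with at most two holes. For a hyperbolic orbifold this means a disc with two cone points of order $2$ is impossible, and the cases with three or more cone points already have $\chi(\Sigma')<0$. Second, and more importantly, a pants curve may bound a disc containing two cone points of order $2$, which is a Euclidean $(2,2)$ pillowcase-disc with no geodesic core. Whenever two order-$2$ cone points would sit together like this, I instead cut along the geodesic arc joining them. Topologically, this is cutting $\Sigma'$ along the arc joining the two punctures, and the result turns the two cone points into a single new boundary component. This matches the degenerate case in the proof of \Cref{lem:basicblock}, where the geodesic collapses to a back-and-forth arc. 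So the procedure is as follows. Pair up order-$2$ cone points as far as possible and cut along disjoint simple geodesic arcs between paired cone points; these can be chosen disjoint, since the arcs can be chosen pairwise disjoint in $\Sigma'$. Then pants-decompose the resulting orbifold, in which at most one order-$2$ cone point remains. With at most one order-$2$ cone point, no pants curve can bound a $(2,2)$ disc, so every piece is hyperbolic and basic.

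Non-orientable $\Sigma$ is handled in the same way: a non-orientable surface with $\chi<0$ also admits a decomposition into pairs of pants by two-sided curves, after cutting one-sided curves or using curves bounding Möbius bands. A Möbius band can be cut further by its one-sided core curve. Alternatively, I would note that $\Sigma'$ minus a suitable set of two-sided curves is orientable. I expect the main obstacle to be this bookkeeping of small cases. This includes checking that after the arc cuts the complexity stays hyperbolic, and handling non-orientable surfaces such as a projective plane with cone points, where one-sided curves have to be cut and reinterpreted as boundary components. I would work through these by an induction on $-\chi^{\orb}(\Sigma)$, cutting one curve or arc at a time and verifying that each piece has negative orbifold Euler characteristic.
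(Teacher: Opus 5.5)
Your argument is correct, but it orders the steps differently from the paper.

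\textbf{The paper's route.} It takes an arbitrary pants decomposition of the punctured surface $R$, which exists since $\chi(R)\le\chi^{\orb}(\Sigma)<0$. After refilling the cone points, the only non-hyperbolic pieces are discs with two cone points of order $2$. Two such discs cannot be adjacent, since their union would be the Euclidean sphere with four order-$2$ cone points. The paper then homotopes the boundary of each such disc onto the arc joining its cone points, which absorbs the disc into its neighbor.

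\textbf{Your route.} You cut first along disjoint arcs pairing off all but at most one order-$2$ cone point, and only then pants-decompose. A $(2,2)$-disc can therefore never appear, and no adjacency argument is needed. The cost is the check you flagged. Cutting along such an arc preserves $\chi^{\orb}$: the two contributions of $-\tfrac12$ are traded for the drop by $1$ in the Euler characteristic of the underlying surface. Hence the cut orbifold is still hyperbolic and its punctured surface still has negative Euler characteristic. Also, a closed sphere with three cone points can occur as a piece only if it is $\Sigma$ itself. The paper's route, by contrast, only modifies an arbitrary pants decomposition locally.

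\textbf{Two small corrections.} First, the case $\chi(\Sigma')\ge 0$ you discuss never occurs, since $\chi(\Sigma')=\chi^{\orb}(\Sigma)-\sum_i 1/p_i<0$. Second, your alternative, that cutting along two-sided curves can make $\Sigma'$ orientable, fails for odd non-orientable genus. In a M\"obius band with a hole every two-sided curve separates, so some piece stays non-orientable. You should therefore rely on cutting along one-sided curves, which is also what the paper's phrase ``pants decomposition'' must implicitly allow in the non-orientable case.
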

\begin{proof}
Let $R$ be the surface with boundary obtained from $\Sigma$ by removing a neighborhood of each cone point.
Then since $\chi(R) \leq \chi^\orb(\Sigma) < 0$, $R$ admits a pants decomposition.
Filling back in the cone points, each pants gives a basic orbifold, unless it is non-hyperbolic. But since $\Sigma$ itself is hyperbolic, this can only happen if two boundary components are filled to cone points of order 2, in which case we have a disc with two cone points of order 2.

We cannot have two such discs be adjacent to each other, for otherwise $\Sigma$ is a sphere with four cone points of order 2, which is not hyperbolic. 
Thus we can homotope the boundary circle of each such disc into the arc between the two cone points.
The effect of this on the complementary surfaces is to remove the disc, leaving us only with the basic orbifolds.
\end{proof}

\begin{proof}[Proof of \Cref{prop:freepiecepartialsection}]
Apply \Cref{lem:cutintobasicorbifolds} and decompose $\Sigma$ into basic orbifolds. 
Let $S$ be the union of basic sections over all basic orbifolds.
By \Cref{lem:basicblock}(3), $S$ intersects every flow line except for the closed orbits lying over the decomposition curves and arcs.
\end{proof}

\section{Proof of main theorem}

With \Cref{prop:surtototallyperiodicrecog} and \Cref{prop:freepiecepartialsection} in hand, we are ready to prove \Cref{thm:graphmanifoldtototallyperiodic}. 

\begin{proof}[Proof of \Cref{thm:graphmanifoldtototallyperiodic}]
Let $\phi$ be a pseudo-Anosov flow on a graph manifold.
We apply \Cref{prop:toribetweenperiodicpieces} to get a transverse torus between every pair of adjacent periodic pieces.
Next, by \Cref{prop:freepiece}, each free piece is orbit semi-equivalent to the $k$-fold fiberwise cover of the geodesic flow of a hyperbolic orbifold with geodesic boundary. We pull back the partial Birkhoff section from \Cref{prop:freepiecepartialsection} to get a partial Birkhoff section with the same properties as listed in the proposition.

Let $S$ be the union of the transverse tori and the partial Birkhoff sections in the free pieces.
Every component of $S$ has genus one, and every boundary component of $S$ has index $-1$.

We turn to analyze the flow lines that miss $S$.
Every flow line that passes from one periodic piece to another intersects one of the transverse tori in $S$.
Meanwhile, by \Cref{prop:freepiecepartialsection}(3), every flow line that passes through the interior of a free piece intersects the component of $S$ in that free piece.
Thus any flow line that misses $S$ must be contained in a periodic piece or a JSJ torus.
But by \Cref{prop:periodicpiece}, the set of flow lines contained in a periodic piece is a finite collection of closed orbits. 
We conclude that the set of flow lines that are disjoint from $S$ is a finite collection of closed orbits as well.

Thus we are done by \Cref{prop:surtototallyperiodicrecog}.
\end{proof}

\bibliographystyle{alphaurl}

\bibliography{bib.bib}

\end{document}

%% file: paflow.pdf_tex
\begingroup%
  \makeatletter%
  \providecommand\color[2][]{%
    \errmessage{(Inkscape) Color is used for the text in Inkscape, but the package 'color.sty' is not loaded}%
    \renewcommand\color[2][]{}%
  }%
  \providecommand\transparent[1]{%
    \errmessage{(Inkscape) Transparency is used (non-zero) for the text in Inkscape, but the package 'transparent.sty' is not loaded}%
    \renewcommand\transparent[1]{}%
  }%
  \providecommand\rotatebox[2]{#2}%
  \newcommand*\fsize{\dimexpr\f@size pt\relax}%
  \newcommand*\lineheight[1]{\fontsize{\fsize}{#1\fsize}\selectfont}%
  \ifx\svgwidth\undefined%
    \setlength{\unitlength}{212.78949647bp}%
    \ifx\svgscale\undefined%
      \relax%
    \else%
      \setlength{\unitlength}{\unitlength * \real{\svgscale}}%
    \fi%
  \else%
    \setlength{\unitlength}{\svgwidth}%
  \fi%
  \global\let\svgwidth\undefined%
  \global\let\svgscale\undefined%
  \makeatother%
  \begin{picture}(1,0.48600821)%
    \lineheight{1}%
    \setlength\tabcolsep{0pt}%
    \put(0,0){\includegraphics[width=\unitlength,page=1]{paflow.pdf}}%
  \end{picture}%
\endgroup%

%% file: gfsurgery.pdf_tex
\begingroup%
  \makeatletter%
  \providecommand\color[2][]{%
    \errmessage{(Inkscape) Color is used for the text in Inkscape, but the package 'color.sty' is not loaded}%
    \renewcommand\color[2][]{}%
  }%
  \providecommand\transparent[1]{%
    \errmessage{(Inkscape) Transparency is used (non-zero) for the text in Inkscape, but the package 'transparent.sty' is not loaded}%
    \renewcommand\transparent[1]{}%
  }%
  \providecommand\rotatebox[2]{#2}%
  \newcommand*\fsize{\dimexpr\f@size pt\relax}%
  \newcommand*\lineheight[1]{\fontsize{\fsize}{#1\fsize}\selectfont}%
  \ifx\svgwidth\undefined%
    \setlength{\unitlength}{365.25198388bp}%
    \ifx\svgscale\undefined%
      \relax%
    \else%
      \setlength{\unitlength}{\unitlength * \real{\svgscale}}%
    \fi%
  \else%
    \setlength{\unitlength}{\svgwidth}%
  \fi%
  \global\let\svgwidth\undefined%
  \global\let\svgscale\undefined%
  \makeatother%
  \begin{picture}(1,0.2825237)%
    \lineheight{1}%
    \setlength\tabcolsep{0pt}%
    \put(0,0){\includegraphics[width=\unitlength,page=1]{gfsurgery.pdf}}%
    \put(0.2611667,0.1804316){\color[rgb]{0,0,0}\makebox(0,0)[lt]{\lineheight{1.25}\smash{\begin{tabular}[t]{l}blow up\end{tabular}}}}%
    \put(0.25360017,0.12720296){\color[rgb]{0,0,0}\makebox(0,0)[lt]{\lineheight{1.25}\smash{\begin{tabular}[t]{l}blow down\end{tabular}}}}%
    \put(0.63898877,0.1804319){\color[rgb]{0,0,0}\makebox(0,0)[lt]{\lineheight{1.25}\smash{\begin{tabular}[t]{l}blow down\end{tabular}}}}%
    \put(0.65195568,0.12720326){\color[rgb]{0,0,0}\makebox(0,0)[lt]{\lineheight{1.25}\smash{\begin{tabular}[t]{l}blow up\end{tabular}}}}%
  \end{picture}%
\endgroup%

%% file: maximallytransverse.pdf_tex
\begingroup%
  \makeatletter%
  \providecommand\color[2][]{%
    \errmessage{(Inkscape) Color is used for the text in Inkscape, but the package 'color.sty' is not loaded}%
    \renewcommand\color[2][]{}%
  }%
  \providecommand\transparent[1]{%
    \errmessage{(Inkscape) Transparency is used (non-zero) for the text in Inkscape, but the package 'transparent.sty' is not loaded}%
    \renewcommand\transparent[1]{}%
  }%
  \providecommand\rotatebox[2]{#2}%
  \newcommand*\fsize{\dimexpr\f@size pt\relax}%
  \newcommand*\lineheight[1]{\fontsize{\fsize}{#1\fsize}\selectfont}%
  \ifx\svgwidth\undefined%
    \setlength{\unitlength}{438.81059782bp}%
    \ifx\svgscale\undefined%
      \relax%
    \else%
      \setlength{\unitlength}{\unitlength * \real{\svgscale}}%
    \fi%
  \else%
    \setlength{\unitlength}{\svgwidth}%
  \fi%
  \global\let\svgwidth\undefined%
  \global\let\svgscale\undefined%
  \makeatother%
  \begin{picture}(1,0.28563759)%
    \lineheight{1}%
    \setlength\tabcolsep{0pt}%
    \put(0,0){\includegraphics[width=\unitlength,page=1]{maximallytransverse.pdf}}%
  \end{picture}%
\endgroup%

%% file: freepiece.pdf_tex
\begingroup%
  \makeatletter%
  \providecommand\color[2][]{%
    \errmessage{(Inkscape) Color is used for the text in Inkscape, but the package 'color.sty' is not loaded}%
    \renewcommand\color[2][]{}%
  }%
  \providecommand\transparent[1]{%
    \errmessage{(Inkscape) Transparency is used (non-zero) for the text in Inkscape, but the package 'transparent.sty' is not loaded}%
    \renewcommand\transparent[1]{}%
  }%
  \providecommand\rotatebox[2]{#2}%
  \newcommand*\fsize{\dimexpr\f@size pt\relax}%
  \newcommand*\lineheight[1]{\fontsize{\fsize}{#1\fsize}\selectfont}%
  \ifx\svgwidth\undefined%
    \setlength{\unitlength}{347.73408905bp}%
    \ifx\svgscale\undefined%
      \relax%
    \else%
      \setlength{\unitlength}{\unitlength * \real{\svgscale}}%
    \fi%
  \else%
    \setlength{\unitlength}{\svgwidth}%
  \fi%
  \global\let\svgwidth\undefined%
  \global\let\svgscale\undefined%
  \makeatother%
  \begin{picture}(1,0.63553925)%
    \lineheight{1}%
    \setlength\tabcolsep{0pt}%
    \put(0,0){\includegraphics[width=\unitlength,page=1]{freepiece.pdf}}%
    \put(0.31529608,0.00475172){\color[rgb]{0,0,0}\makebox(0,0)[lt]{\lineheight{1.25}\smash{\begin{tabular}[t]{l}$\Sigma$\end{tabular}}}}%
    \put(0.29871542,0.30119658){\color[rgb]{0,0,0}\makebox(0,0)[lt]{\lineheight{1.25}\smash{\begin{tabular}[t]{l}$T^1 \Sigma$\end{tabular}}}}%
    \put(0.85019055,0.00475172){\color[rgb]{0,0,0}\makebox(0,0)[lt]{\lineheight{1.25}\smash{\begin{tabular}[t]{l}$\Sigma$\end{tabular}}}}%
    \put(0.8336099,0.30119658){\color[rgb]{0,0,0}\makebox(0,0)[lt]{\lineheight{1.25}\smash{\begin{tabular}[t]{l}$T^1 \Sigma$\end{tabular}}}}%
    \put(0.411845,0.32250726){\color[rgb]{0,0,0}\makebox(0,0)[lt]{\lineheight{1.25}\smash{\begin{tabular}[t]{l}hyperbolic\end{tabular}}}}%
    \put(0.411845,0.28636522){\color[rgb]{0,0,0}\makebox(0,0)[lt]{\lineheight{1.25}\smash{\begin{tabular}[t]{l}blow up\end{tabular}}}}%
  \end{picture}%
\endgroup%

%% file: roundhandle.pdf_tex
\begingroup%
  \makeatletter%
  \providecommand\color[2][]{%
    \errmessage{(Inkscape) Color is used for the text in Inkscape, but the package 'color.sty' is not loaded}%
    \renewcommand\color[2][]{}%
  }%
  \providecommand\transparent[1]{%
    \errmessage{(Inkscape) Transparency is used (non-zero) for the text in Inkscape, but the package 'transparent.sty' is not loaded}%
    \renewcommand\transparent[1]{}%
  }%
  \providecommand\rotatebox[2]{#2}%
  \newcommand*\fsize{\dimexpr\f@size pt\relax}%
  \newcommand*\lineheight[1]{\fontsize{\fsize}{#1\fsize}\selectfont}%
  \ifx\svgwidth\undefined%
    \setlength{\unitlength}{100.1096386bp}%
    \ifx\svgscale\undefined%
      \relax%
    \else%
      \setlength{\unitlength}{\unitlength * \real{\svgscale}}%
    \fi%
  \else%
    \setlength{\unitlength}{\svgwidth}%
  \fi%
  \global\let\svgwidth\undefined%
  \global\let\svgscale\undefined%
  \makeatother%
  \begin{picture}(1,1.07382299)%
    \lineheight{1}%
    \setlength\tabcolsep{0pt}%
    \put(0,0){\includegraphics[width=\unitlength,page=1]{roundhandle.pdf}}%
  \end{picture}%
\endgroup%

%% file: bookofIbundlesrecog.pdf_tex
\begingroup%
  \makeatletter%
  \providecommand\color[2][]{%
    \errmessage{(Inkscape) Color is used for the text in Inkscape, but the package 'color.sty' is not loaded}%
    \renewcommand\color[2][]{}%
  }%
  \providecommand\transparent[1]{%
    \errmessage{(Inkscape) Transparency is used (non-zero) for the text in Inkscape, but the package 'transparent.sty' is not loaded}%
    \renewcommand\transparent[1]{}%
  }%
  \providecommand\rotatebox[2]{#2}%
  \newcommand*\fsize{\dimexpr\f@size pt\relax}%
  \newcommand*\lineheight[1]{\fontsize{\fsize}{#1\fsize}\selectfont}%
  \ifx\svgwidth\undefined%
    \setlength{\unitlength}{124.51096542bp}%
    \ifx\svgscale\undefined%
      \relax%
    \else%
      \setlength{\unitlength}{\unitlength * \real{\svgscale}}%
    \fi%
  \else%
    \setlength{\unitlength}{\svgwidth}%
  \fi%
  \global\let\svgwidth\undefined%
  \global\let\svgscale\undefined%
  \makeatother%
  \begin{picture}(1,0.89883102)%
    \lineheight{1}%
    \setlength\tabcolsep{0pt}%
    \put(0,0){\includegraphics[width=\unitlength,page=1]{bookofIbundlesrecog.pdf}}%
    \put(0.06751217,0.01132569){\color[rgb]{0,0,0}\makebox(0,0)[lt]{\lineheight{1.25}\smash{\begin{tabular}[t]{l}$S$\end{tabular}}}}%
    \put(0.87808522,0.242366){\color[rgb]{0,0,0}\makebox(0,0)[lt]{\lineheight{1.25}\smash{\begin{tabular}[t]{l}$R$\end{tabular}}}}%
  \end{picture}%
\endgroup%

%% file: freepiecesectioncurve.pdf_tex
\begingroup%
  \makeatletter%
  \providecommand\color[2][]{%
    \errmessage{(Inkscape) Color is used for the text in Inkscape, but the package 'color.sty' is not loaded}%
    \renewcommand\color[2][]{}%
  }%
  \providecommand\transparent[1]{%
    \errmessage{(Inkscape) Transparency is used (non-zero) for the text in Inkscape, but the package 'transparent.sty' is not loaded}%
    \renewcommand\transparent[1]{}%
  }%
  \providecommand\rotatebox[2]{#2}%
  \newcommand*\fsize{\dimexpr\f@size pt\relax}%
  \newcommand*\lineheight[1]{\fontsize{\fsize}{#1\fsize}\selectfont}%
  \ifx\svgwidth\undefined%
    \setlength{\unitlength}{93.52534617bp}%
    \ifx\svgscale\undefined%
      \relax%
    \else%
      \setlength{\unitlength}{\unitlength * \real{\svgscale}}%
    \fi%
  \else%
    \setlength{\unitlength}{\svgwidth}%
  \fi%
  \global\let\svgwidth\undefined%
  \global\let\svgscale\undefined%
  \makeatother%
  \begin{picture}(1,0.8974646)%
    \lineheight{1}%
    \setlength\tabcolsep{0pt}%
    \put(0,0){\includegraphics[width=\unitlength,page=1]{freepiecesectioncurve.pdf}}%
    \put(0.85925695,0.64667124){\color[rgb]{1,0,0}\makebox(0,0)[lt]{\lineheight{1.25}\smash{\begin{tabular}[t]{l}$\alpha$\end{tabular}}}}%
  \end{picture}%
\endgroup%

%% file: freepiecesectionbasic.pdf_tex
\begingroup%
  \makeatletter%
  \providecommand\color[2][]{%
    \errmessage{(Inkscape) Color is used for the text in Inkscape, but the package 'color.sty' is not loaded}%
    \renewcommand\color[2][]{}%
  }%
  \providecommand\transparent[1]{%
    \errmessage{(Inkscape) Transparency is used (non-zero) for the text in Inkscape, but the package 'transparent.sty' is not loaded}%
    \renewcommand\transparent[1]{}%
  }%
  \providecommand\rotatebox[2]{#2}%
  \newcommand*\fsize{\dimexpr\f@size pt\relax}%
  \newcommand*\lineheight[1]{\fontsize{\fsize}{#1\fsize}\selectfont}%
  \ifx\svgwidth\undefined%
    \setlength{\unitlength}{191.067601bp}%
    \ifx\svgscale\undefined%
      \relax%
    \else%
      \setlength{\unitlength}{\unitlength * \real{\svgscale}}%
    \fi%
  \else%
    \setlength{\unitlength}{\svgwidth}%
  \fi%
  \global\let\svgwidth\undefined%
  \global\let\svgscale\undefined%
  \makeatother%
  \begin{picture}(1,1.33608117)%
    \lineheight{1}%
    \setlength\tabcolsep{0pt}%
    \put(0,0){\includegraphics[width=\unitlength,page=1]{freepiecesectionbasic.pdf}}%
    \put(0.46734152,1.29580578){\color[rgb]{0,0,0}\makebox(0,0)[lt]{\lineheight{1.25}\smash{\begin{tabular}[t]{l}$c_2$\end{tabular}}}}%
    \put(0.46734152,0.6991578){\color[rgb]{0,0,0}\makebox(0,0)[lt]{\lineheight{1.25}\smash{\begin{tabular}[t]{l}$c_0$\end{tabular}}}}%
    \put(0.46734152,0.99748182){\color[rgb]{0,0,0}\makebox(0,0)[lt]{\lineheight{1.25}\smash{\begin{tabular}[t]{l}$c_1$\end{tabular}}}}%
    \put(0.90610967,0.99488924){\color[rgb]{0,0,0}\makebox(0,0)[lt]{\lineheight{1.25}\smash{\begin{tabular}[t]{l}$x$\end{tabular}}}}%
    \put(0.17017395,0.00922545){\color[rgb]{0,0,0}\makebox(0,0)[lt]{\lineheight{1.25}\smash{\begin{tabular}[t]{l}$x$\end{tabular}}}}%
    \put(0.04254104,0.99488924){\color[rgb]{0,0,0}\makebox(0,0)[lt]{\lineheight{1.25}\smash{\begin{tabular}[t]{l}$x$\end{tabular}}}}%
    \put(0.47519215,0.82476791){\color[rgb]{0,0,0}\makebox(0,0)[lt]{\lineheight{1.25}\smash{\begin{tabular}[t]{l}$y$\end{tabular}}}}%
    \put(0.77467239,0.04847805){\color[rgb]{0,0,0}\makebox(0,0)[lt]{\lineheight{1.25}\smash{\begin{tabular}[t]{l}$y$\end{tabular}}}}%
    \put(0.68993471,1.19030573){\color[rgb]{1,0,0}\makebox(0,0)[lt]{\lineheight{1.25}\smash{\begin{tabular}[t]{l}$\alpha$\end{tabular}}}}%
    \put(0.31130445,0.60616541){\color[rgb]{1,0,0}\makebox(0,0)[lt]{\lineheight{1.25}\smash{\begin{tabular}[t]{l}$\gamma$\end{tabular}}}}%
    \put(0.69699506,0.53141637){\color[rgb]{1,0,0}\makebox(0,0)[lt]{\lineheight{1.25}\smash{\begin{tabular}[t]{l}$\gamma$\end{tabular}}}}%
    \put(0.17463776,0.83458463){\color[rgb]{1,0.8,0}\makebox(0,0)[lt]{\lineheight{1.25}\smash{\begin{tabular}[t]{l}$S_1$\end{tabular}}}}%
    \put(-0.0039611,0.21616843){\color[rgb]{1,0.8,0}\makebox(0,0)[lt]{\lineheight{1.25}\smash{\begin{tabular}[t]{l}$S_1$\end{tabular}}}}%
    \put(0.56747791,0.300319){\color[rgb]{1,0.8,0}\makebox(0,0)[lt]{\lineheight{1.25}\smash{\begin{tabular}[t]{l}$S_1$\end{tabular}}}}%
  \end{picture}%
\endgroup%

%% file: freepiecesectiondisjoint.pdf_tex
\begingroup%
  \makeatletter%
  \providecommand\color[2][]{%
    \errmessage{(Inkscape) Color is used for the text in Inkscape, but the package 'color.sty' is not loaded}%
    \renewcommand\color[2][]{}%
  }%
  \providecommand\transparent[1]{%
    \errmessage{(Inkscape) Transparency is used (non-zero) for the text in Inkscape, but the package 'transparent.sty' is not loaded}%
    \renewcommand\transparent[1]{}%
  }%
  \providecommand\rotatebox[2]{#2}%
  \newcommand*\fsize{\dimexpr\f@size pt\relax}%
  \newcommand*\lineheight[1]{\fontsize{\fsize}{#1\fsize}\selectfont}%
  \ifx\svgwidth\undefined%
    \setlength{\unitlength}{206.84184878bp}%
    \ifx\svgscale\undefined%
      \relax%
    \else%
      \setlength{\unitlength}{\unitlength * \real{\svgscale}}%
    \fi%
  \else%
    \setlength{\unitlength}{\svgwidth}%
  \fi%
  \global\let\svgwidth\undefined%
  \global\let\svgscale\undefined%
  \makeatother%
  \begin{picture}(1,0.72359576)%
    \lineheight{1}%
    \setlength\tabcolsep{0pt}%
    \put(0,0){\includegraphics[width=\unitlength,page=1]{freepiecesectiondisjoint.pdf}}%
    \put(0.10397203,0.36170183){\color[rgb]{1,0.8,0}\makebox(0,0)[lt]{\lineheight{1.25}\smash{\begin{tabular}[t]{l}$S_1$\end{tabular}}}}%
    \put(0.33778404,0.22794147){\color[rgb]{0.50196078,0.50196078,0}\makebox(0,0)[lt]{\lineheight{1.25}\smash{\begin{tabular}[t]{l}$S_2$\end{tabular}}}}%
    \put(0.33777902,0.49564318){\color[rgb]{0.50196078,0.2,0}\makebox(0,0)[lt]{\lineheight{1.25}\smash{\begin{tabular}[t]{l}$S_3$\end{tabular}}}}%
    \put(0.45125382,0.36413768){\color[rgb]{1,0,0}\makebox(0,0)[lt]{\lineheight{1.25}\smash{\begin{tabular}[t]{l}$\alpha$\end{tabular}}}}%
    \put(0,0){\includegraphics[width=\unitlength,page=2]{freepiecesectiondisjoint.pdf}}%
    \put(0.44059639,0.09260801){\color[rgb]{0,0,0}\makebox(0,0)[lt]{\lineheight{1.25}\smash{\begin{tabular}[t]{l}$z$\end{tabular}}}}%
    \put(0.84137387,0.00852191){\color[rgb]{0,0,0}\makebox(0,0)[lt]{\lineheight{1.25}\smash{\begin{tabular}[t]{l}$z$\end{tabular}}}}%
    \put(0,0){\includegraphics[width=\unitlength,page=3]{freepiecesectiondisjoint.pdf}}%
  \end{picture}%
\endgroup%

%% file: freepiecesectioncone.pdf_tex
\begingroup%
  \makeatletter%
  \providecommand\color[2][]{%
    \errmessage{(Inkscape) Color is used for the text in Inkscape, but the package 'color.sty' is not loaded}%
    \renewcommand\color[2][]{}%
  }%
  \providecommand\transparent[1]{%
    \errmessage{(Inkscape) Transparency is used (non-zero) for the text in Inkscape, but the package 'transparent.sty' is not loaded}%
    \renewcommand\transparent[1]{}%
  }%
  \providecommand\rotatebox[2]{#2}%
  \newcommand*\fsize{\dimexpr\f@size pt\relax}%
  \newcommand*\lineheight[1]{\fontsize{\fsize}{#1\fsize}\selectfont}%
  \ifx\svgwidth\undefined%
    \setlength{\unitlength}{283.43037523bp}%
    \ifx\svgscale\undefined%
      \relax%
    \else%
      \setlength{\unitlength}{\unitlength * \real{\svgscale}}%
    \fi%
  \else%
    \setlength{\unitlength}{\svgwidth}%
  \fi%
  \global\let\svgwidth\undefined%
  \global\let\svgscale\undefined%
  \makeatother%
  \begin{picture}(1,0.5579783)%
    \lineheight{1}%
    \setlength\tabcolsep{0pt}%
    \put(0,0){\includegraphics[width=\unitlength,page=1]{freepiecesectioncone.pdf}}%
    \put(0.2577996,0.45182291){\color[rgb]{0,0,0}\makebox(0,0)[lt]{\lineheight{1.25}\smash{\begin{tabular}[t]{l}$c_2$\end{tabular}}}}%
    \put(0.2577996,0.04960742){\color[rgb]{0,0,0}\makebox(0,0)[lt]{\lineheight{1.25}\smash{\begin{tabular}[t]{l}$c_0$\end{tabular}}}}%
    \put(0.26309191,0.20108374){\color[rgb]{0,0,0}\makebox(0,0)[lt]{\lineheight{1.25}\smash{\begin{tabular}[t]{l}$c$\end{tabular}}}}%
    \put(0.26309191,0.29834599){\color[rgb]{0,0,0}\makebox(0,0)[lt]{\lineheight{1.25}\smash{\begin{tabular}[t]{l}$2$\end{tabular}}}}%
    \put(0.38827158,0.30607516){\color[rgb]{1,0.8,0}\makebox(0,0)[lt]{\lineheight{1.25}\smash{\begin{tabular}[t]{l}$S_1$\end{tabular}}}}%
    \put(0.38827158,0.19493668){\color[rgb]{0.50196078,0.2,0}\makebox(0,0)[lt]{\lineheight{1.25}\smash{\begin{tabular}[t]{l}$S_3$\end{tabular}}}}%
    \put(0.09190221,0.19493668){\color[rgb]{0.50196078,0.50196078,0}\makebox(0,0)[lt]{\lineheight{1.25}\smash{\begin{tabular}[t]{l}$S_2$\end{tabular}}}}%
    \put(0,0){\includegraphics[width=\unitlength,page=2]{freepiecesectioncone.pdf}}%
  \end{picture}%
\endgroup%